\newtheorem{thm}{Theorem}
\newtheorem{assumption}[thm]{Assumption}
\newtheorem{setting}[thm]{Setting}
\newtheorem{lem}[thm]{Lemma}
\newtheorem{prop}[thm]{Proposition}
\newtheorem{rk}[thm]{Remark}
\newcommand{\vip}{\vskip.2cm}
\newcommand{\e}{{\varepsilon}}
\newcommand{\cX}{{\mathcal X}}
\newcommand{\cH}{{\mathcal H}}
\newcommand{\cT}{{\mathcal T}}
\newcommand{\cA}{{\mathcal A}}
\newcommand{\cZ}{{\mathcal Z}}
\newcommand{\cY}{{\mathcal Y}}
\newcommand{\cB}{{\mathcal B}}
\newcommand{\cF}{{\mathcal F}}
\newcommand{\cI}{{\mathcal I}}
\newcommand{\cG}{{\mathcal G}}
\newcommand{\cM}{{\mathcal M}}
\newcommand{\rr}{{\mathbb{R}}}
\newcommand{\Q}{{\mathbb{Q}}}
\newcommand{\HH}{{\mathbb{H}}}
\newcommand{\bF}{{{\bf F}}}
\newcommand{\bQ}{{\bar Q}}
\newcommand{\bpi}{{\bar \pi}}
\newcommand{\tV}{{\tilde V}}
\newcommand{\tQ}{{\tilde Q}}
\newcommand{\tN}{{\tilde N}}
\newcommand{\ttau}{{\tilde \tau}}
\newcommand{\te}{{\tilde \e}}
\newcommand{\tG}{{\tilde G}}
\newcommand{\tpi}{{\tilde \pi}}
\newcommand{\hQ}{{\hat Q}}
\newcommand{\hf}{{\hat f}}
\newcommand{\hphi}{{\hat \varphi}}
\newcommand{\hpi}{{\hat \pi}}
\newcommand{\argmax}{{\rm arg\,max}\;}
\newcommand{\dd}{{\rm d}}
\newcommand{\nn}{{\mathbb{N}}}
\newcommand{\ig}{[\![}
\newcommand{\id}{]\!]}
\newcommand{\zz}{e}
\newcommand{\E}{\mathbb{E}}
\newcommand{\PP}{\mathbb{P}}
\newcommand{\indiq}{{{\mathbf 1}}}
\newcommand{\black}{\color{black}}
\newcommand{\blue}{\color{black}}
\begin{document}

\title[Markov decision processes]{Markov decision processes: on the convergence of the Monte-Carlo 
first visit algorithm}

\author{Sylvain Delattre}

\author{Nicolas Fournier}

\address{Sylvain Delattre, Université Paris Cité and Sorbonne Université, CNRS, Laboratoire de 
Probabilités, Statistique et Modélisation, F-75013 Paris, France.}

\email{sylvain.delattre@lpsm.paris}

\address{Nicolas Fournier,  Sorbonne Université and Université Paris Cité, CNRS, 
Laboratoire de Probabilités, Statistique et Modélisation, F-75005 Paris, France.}

\email{nicolas.fournier@sorbonne-universite.fr}

\begin{abstract}
We consider the Monte-Carlo first visit algorithm, of which the goal is to find the optimal control in
a Markov decision process with finite state space and finite number of possible actions.
We show its convergence when the discount factor is smaller than $1/2$.
\end{abstract}

\subjclass[2020]{90C40, 60J20}

\keywords{Markov decision processes, First-visit algorithm}

\thanks{We warmly thank the referees for their fruitful comments}

\maketitle

\section{Introduction}

\blue
This paper deals with
Markov decision processes (MDP) with finite states/actions set.
A MDP is a model where an agent interacts with its environment: at each step, the agent lies in some state $x$, 
chooses some action $a$, and the environment chooses at random, with some law depending on $(x,a)$, 
the instantaneous reward and the next state to which the agent goes. 
The goal is to find a policy (i.e. a way to choose the actions given the states)
maximizing the expectation of $\sum_{t\geq 1} \gamma^{t-1} R_{t}$, where
$R_{t}$ is the instantaneous reward at step $t\geq 1$ and where $\gamma\in [0,1]$ is the discount factor.

\vip

There are mainly three types of model-free algorithms used to find an optimal policy: 
the so-called $Q$-learning algorithm,
the so-called TD (temporal difference) algorithm and its variant TD$(\lambda)$, and the so-called 
Monte-Carlo algorithms (first visit and every visit). All these algorithms are model free, 
in that we do not suppose that we know the transition probabilities: we
only assume we can interact with the MDP by producing some episodes, given a policy. 
We refer to Sutton-Barto \cite{sb} for a complete and 
accurate exposition on all these algorithms. In particular, the convergence of the $Q$-learning algorithm
(see Watkins-Dayan~\cite{wd}) and of the TD-algorithm and its variants (see Sutton~\cite{s}, Dayan~\cite{d}) 
has been established for any value of 
$\gamma\in [0,1)$, provided there is enough exploration. All these results rely on the fact that there is some
underlying approximate $\gamma$-contraction in the space of the value functions (from the set of couples 
state/action into $\rr$). On the contrary, the convergence of the Monte-Carlo algorithms is still widely open.
See however the partial results of Tsitsiklis~\cite{t} and Wang-Yuan-Shao-Ross~\cite{wysr} that 
we will discuss below.
Let us quote Sutton-Barto \cite[page 99]{sb},  discussing about the convergence of the FVA/EVA:
``In our opinion, this is one of the most fundamental open
theoretical questions in reinforcement learning (for a partial solution, see Tsitsiklis, 2002).''

\vip

The main difference between the Monte-Carlo algorithms and the other ones is that they
do not bootstrap, in the sense that they do not use the approximate value function to update 
this value function. They use instead a non-biased estimation of the total reward, which explains their robustness.
In particular, they are still used in the modern context of the function approximation (meaning that the
states/actions space is very large so that we have to approximate the value functions using a parameterized
family of functions), while it is well-known that the other algorithms may fail to converge in such a context,
see Baird's counterexample~\cite{baird} and the discussion of Silver~\cite{silv}.

\vip

Monte-Carlo algorithms are based on simulating a sequence
of episodes. We modify, after 
each episode, the policy used to produce the next episode.
There are two main available Monte-Carlo algorithms: the {\it first visit algorithm} (FVA) and 
the {\it every visit algorithm} (EVA), see \cite[Section 5.1]{sb}. In the FVA (resp. EVA), 
we update the policy by estimating the value function at each pair $(x,a)$ of state/action using only the first 
visit at $(x,a)$ (resp. all the visits at $(x,a)$) of each episode.

\black
\vip

We study here the FVA, which is easier than the EVA from a theoretical point of view.
We are able to prove that when $\gamma \in [0,1/2)$, 
the FVA produces some policy of which the value function converges to the optimal one.
Actually, we consider a more general family of (abstract) algorithms and show with a counter
example (strongly inspired by Example 5.12 in Bertsekas-Tsitsiklis \cite{bt}) that the convergence 
cannot hold true when \blue $\gamma > 1/2$ \black for this general family.
However, the counter example is far from concerning the FVA: it only shows that our 
proof breaks down, not at all that the FVA does not converge.

\vip

Tsitsiklis \cite{t} studies, among other algorithms, 
a {\it synchronous optimistic policy iteration algorithm}, which is a simplified synchronous
version of the FVA: each iteration consists of simulating many trajectories, one per couple
state/action. More or less, this implies that the \blue learning rate  $\alpha_k(x,a)$ is deterministic 
and does not depend on $(x,a)$ in Algorithm 1 below.
He shows the convergence of his algorithm for any 
$\gamma \in [0,1)$. The proof we will handle in the present paper is of the same spirit as that of 
\cite{t}, with a number of complications, mainly because our learning rates depend on the states/actions. 
In particular,
an important step of our proof consists in showing an approximate contraction property 
with parameter $\gamma/(1-\gamma)$, which is smaller than $1$ only when $\gamma \in [0,1/2)$, 
see Lemma \ref{contr}.
\black

\vip
Liu \cite{l} extends the result of \cite{t} to the case $\gamma=1$, assuming that all the policies are proper:
some final state space is reached with probability $1$, starting from any state and using any policy. 
\blue He replaces the contraction property used in \cite{t} by some monotony properties.\black

\vip

Wang-Yuan-Shao-Ross \cite{wysr} show the convergence of the FVA, with $\gamma=1$ (but, as they mention, 
this extends to any $\gamma \in [0,1]$), 
assuming a structural condition on the MDP: it has to be {\it optimal policy 
feed forward}, meaning that a state cannot be visited twice under any optimal policy.
\blue Their proof strongly relies on the fact that in such a case, the MDP almost has a bounded horizon, 
so that they can work by backward induction.\black

\vip

\blue 
It thus seems that the present paper provides the first convergence proof of the FVA without assuming 
any structural condition on the underlying MDP. However, only half the way is done, 
since we can only treat the case $\gamma\in [0,1/2)$, and there is no hope that
our proof could extend to the case $\gamma > 1/2$. Our proof does not provide any rate of convergence, 
but we are not aware of any result of this kind for Monte-Carlo control algorithms.
\vip

\blue 
Of course, many real-life MDPs have an infinite state-space. Even in the countable case, we cannot hope 
to explorate all the states. 
Hence the use of some $\max$ norm (as in the present work) to compare the value function and its 
approximation is irrelevant. Thus one has to use another norm, of which the shape necessarily strongly 
depends on the MDP under study. A general study seems extremely delicate.

\black

\blue 
\subsection*{Plan of the paper}
In the next section, we describe the model under consideration, recall the main known results about optimal
policies, introduce the algorithms to be studied and present our main results.
In Section \ref{cont}, we prove a crucial contraction result, with constant $\gamma/(1-\gamma)$, which 
is smaller than $1$ if and only if $\gamma <1/2$.
Section \ref{agp} is devoted to the proof of an abstract convergence result.
This abstract result is applied to show the convergence of a general algorithm
in Section \ref{convgen}. In Section \ref{convfv}, we show how to apply this convergence result to the 
first-visit algorithm. We quickly study the case with finite episodes in Section \ref{mrs}.
In Section \ref{counterex}, we show, through a counter-example, that our proof 
cannot be extended to the case where $\gamma \in (1/2,1)$.
Finally, we quickly recall in Appendix \ref{qpkr}, for the sake of completeness, 
the proofs of some well-known results about 
Markov decision processes and of a simple version of the Robbins-Monro lemma.

\black

\section{Notation and results}\label{nmr}

\blue For $k,\ell \in \nn$ with $k\leq \ell$, we use the notation $\ig k,\ell\id=\{k,k+1,\dots,\ell\}$. \black

\subsection{The model}\label{mod}

The following objects are fixed in the whole paper.

\begin{setting}\label{set}
Let $\cX$ be a non-empty finite state space and, for each $x\in \cX$, let $\cA_x$ be a non-empty 
finite set of possible
actions. We also set $\cZ=\{(x,a) : x \in \cX, a \in \cA_x\}$. 
For each $(x,a) \in \cZ$,
we consider a probability measure $P(x,a,\cdot)$ on $\cX$. We also consider,
for each $(x,a) \in \cZ$ and $y \in \cX$, a probability measure $S(x,a,y,\cdot)$ on $\rr$ satisfying
$\int_\rr z^2 S(x,a,y,\dd z)<\infty$ and we set
\begin{equation}\label{ttt}
g(x,a,y)=\int_\rr zS(x,a,y,\dd z).
\end{equation}
Finally, we consider a real number $\gamma \in [0,1)$.
\end{setting}
\vip

For $t \in\nn$, we denote by $\cB_t=\{(x_0,a_0,\dots,x_{t-1},a_{t-1},x_t) : 
x_i \in \cX, a_i \in \cA_{x_i}\}$.
A {\it policy} is a family $\Pi=(\Pi_t)_{t\geq 0}$, where for each $t\geq 0$, for each 
$h=(x_0,a_0,\dots,x_{t-1},a_{t-1},x_t)\in \cB_t$, $\Pi_t(h,\cdot)$ is a probability measure on 
$\cA_{x_t}$.

\vip

A policy $\Pi=(\Pi_t)_{t\geq 0}$ is said to be SM (for stationary Markov) if there is a family
$(\pi(x,\cdot))_{x\in \cX}$, with $\pi(x,\cdot)$ a probability measure on $\cA_x$, such that for all 
$t\geq 0$, all $h=(x_0,a_0,\dots,x_{t-1},a_{t-1},x_t)\in \cB_t$, $\Pi_t(h,\cdot)=\pi(x_t,\cdot)$.
In such a case, we simply say that $\pi=(\pi(x,\cdot))_{x\in \cX}$ is a SM policy.

\vip

Given a starting point $x \in \cX$ and a policy $\Pi$, we build recursively the stochastic process 
$(X_t,A_t)_{t\in \nn}$ as follows:
we set $X_0=x$, we build $A_0$ with law $\Pi_0(X_0,\cdot)$ and, assuming that we have built 
$(X_s,A_s)_{s \in \ig 0,t-1\id}$ for some $t\geq 1$, we first build
$X_t$ with conditional law $P(X_{t-1},A_{t-1},\cdot)$ knowing  $(X_s,A_s)_{s \in \ig 0,t-1\id}$, we set
$H_t=(X_0,A_0,\dots,X_{t-1},A_{t-1},X_t)$ and we build $A_t$ 
with conditional law $\Pi_t(H_t,\cdot)$ knowing 
$H_t$. 

\vip

At each step, there is a reward: conditionally on the whole process 
$(X_s,A_s)_{s\in \nn}$ the family of
rewards $(R_s)_{s \in \nn_*}$ is independent and for each $t\geq 1$, the reward $R_t$ is
$S(X_{t-1},A_{t-1},X_t,\cdot)$-distributed. The total reward is then given by
\begin{equation}\label{G}
G=\sum_{t \in \nn} \gamma^t R_{t+1}\qquad \hbox{(convention: $0^0=1$).}
\end{equation}

We indicate in subscript of the probability $\PP_{x,\Pi}$ and expectation $\E_{x,\Pi}$
the starting point $x\in \cX$ and the policy $\Pi$ used to build the above random variable $G$,
and we consider the value function
\begin{equation}\label{vpi}
V_\Pi(x)=\E_{x,\Pi}[G].
\end{equation}
For $\pi$ a SM policy, we simply denote by $\PP_{x,\pi}$, $\E_{x,\pi}$ and $V_\pi(x)$ the corresponding objects. 
Finally,
we set, for $x\in \cX$,
$$
V^*(x)= \sup \{V_\Pi(x) , \; \Pi \;\;\hbox{policy}\}.
$$

\subsection{Optimal policy}\label{opol}
The existence of an optimal policy, which is SM and does not depend on the 
starting point, is well known, see Puterman \cite{p} and Sutton-Barto \cite{sb}.
\blue 
Things can be summarized as follows. For $(x,a)\in \cZ$, we set 
\begin{equation}\label{qstar1}
r(x,a)=\sum_{y\in \cX} P(x,a,y) g(x,a,y),
\end{equation}
which stands for the mean (instantaneous) reward, when the process lies in state $x$ and when one chooses
the action $a$. For $\pi$ a SM policy, for $x,y\in \cX$ and $a\in\cA_x$, we set 
\begin{gather}
r_\pi(x)=\sum_{a \in \cA_x}r(x,a)\pi(x,a), \qquad
P_\pi(x,y)=\sum_{a \in \cA_x}P(x,a,y)\pi(x,a), \label{rpiPpi} \\
\hbox{and} \quad Q_\pi(x,a)=r(x,a)+\gamma PV_\pi(x,a), \label{qpi}
\end{gather}
where $PV_\pi(x,a)=\sum_{y\in \cX} P(x,a,y)V_\pi(y)$. Observe that 
$r_\pi(x)$ represents the mean (instantaneous) reward, 
when the process lies in state $x$ and when using the policy $\pi$, that $P_\pi$ is the transition matrix
of the process $(X_t)_{t\in\nn}$ when using the policy $\pi$,
while $Q_\pi(x,a)$ stands for the mean total reward, when starting from the state $x$, when choosing
$a$ as first action, and when using the policy $\pi$ for the rest of the process. Finally, we introduce
\begin{equation}\label{qstar}
Q^*(x,a)=r(x,a)+\gamma PV^*(x,a), \qquad \hbox{where}\qquad PV^*(x,a)=\sum_{y\in \cX} P(x,a,y)V^*(y).
\end{equation}
\black

\begin{thm}\label{known}
(i) Consider a SM policy $\pi^*$ such that
\begin{equation}\label{trruc}
\pi^*\Big(x, \argmax Q^*(x,\cdot)\Big)=1 \quad \hbox{for all } x\in \cX.
\end{equation}
Then $V_{\pi^*}(x)=V^*(x)$ for all $x\in\cX$.
\vip
(ii) For $\pi$ a SM policy, we have $Q_\pi(x,a)\leq Q^*(x,a)$ for all $(x,a)\in \cZ$ and, for all $(x,a)\in \cZ$,
\begin{gather*}
V_\pi(x)=r_\pi(x)+\gamma P_\pi V_\pi (x)= \sum_{a\in \cA_x} Q_\pi(x,a)\pi(x,a),\\
Q_\pi(x,a)=r(x,a)+\gamma \sum_{(y,b)\in \cZ} P(x,a,y)\pi(y,b)Q_\pi(y,b)
\blue =r(x,a)+\gamma \sum_{y\in\cX} P(x,a,y) V_\pi(y). \black
\end{gather*}

(iii) If $\pi^*$ satisfies \eqref{trruc}, then $Q_{\pi^*}=Q^*$. Moreover, for all $(x,a)\in \cZ$
\begin{gather*}
V^*(x)=\max_{a \in \cA_x} Q^*(x,a), \\
Q^*(x,a)=r(x,a)+\gamma \sum_{y\in \cX} P(x,a,y) \max_{b \in \cA_y} Q^*(y,b)
\blue = r(x,a)+\gamma\sum_{y\in \cX} P(x,a,y) V^*(y).\black
\end{gather*}
\end{thm}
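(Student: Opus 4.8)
The plan is to reduce the three assertions to the Bellman optimality operator together with a single contraction/uniqueness argument. Introduce the operator $\cT$ acting on functions $V:\cX\to\rr$ by
$$(\cT V)(x)=\max_{a\in\cA_x}\Big[r(x,a)+\gamma\sum_{y\in\cX}P(x,a,y)V(y)\Big].$$
Since $\gamma<1$ and each $P(x,a,\cdot)$ is a probability measure, $\cT$ is a $\gamma$-contraction for the sup-norm on $\rr^\cX$ (which is complete since $\cX$ is finite), so it admits a unique fixed point, which I denote $\bar V$. The whole argument amounts to proving that $\bar V=V^*$ and that any SM policy $\pi^*$ greedy with respect to $\bar V$ satisfies $V_{\pi^*}=\bar V$; all the stated identities then follow.

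First I would establish the policy-evaluation identities of (ii). Fixing a SM policy $\pi$ and conditioning the total reward $G$ on the first action $A_0$ and first transition $X_1$, the Markov property gives, for the process started at $x$,
$$V_\pi(x)=\sum_{a\in\cA_x}\pi(x,a)\Big[r(x,a)+\gamma\sum_{y\in\cX}P(x,a,y)V_\pi(y)\Big],$$
which is exactly $V_\pi=r_\pi+\gamma P_\pi V_\pi$; reading the bracket off as $Q_\pi(x,a)$ yields $V_\pi(x)=\sum_a Q_\pi(x,a)\pi(x,a)$ and the two displayed formulas for $Q_\pi$. The structural remark to retain is that the affine map $v\mapsto r_\pi+\gamma P_\pi v$ is itself a $\gamma$-contraction (again because $P_\pi$ is stochastic and $\gamma<1$), so $V_\pi$ is its \emph{unique} fixed point.

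The main obstacle is the upper bound $V^*\le\bar V$, precisely because it must hold for arbitrary, possibly history-dependent, policies $\Pi$. I would handle it by finite-horizon truncation: set $V_\Pi^{(n)}(x)=\E_{x,\Pi}[\sum_{t=0}^{n-1}\gamma^tR_{t+1}]$ and $M_n(x)=\sup_\Pi V_\Pi^{(n)}(x)$, and prove by induction on $n$ that $M_n\le\cT^n\mathbf{0}$, where $\mathbf{0}$ is the null function. The step conditions on the first transition and uses that, whatever the initial history $(x,a,y)$, the continuation of $\Pi$ from $X_1=y$ is again a policy, so its $(n-1)$-horizon value is at most $M_{n-1}(y)\le(\cT^{n-1}\mathbf{0})(y)$ by the induction hypothesis; this gives $V_\Pi^{(n)}(x)\le(\cT M_{n-1})(x)\le(\cT^n\mathbf{0})(x)$ uniformly in $\Pi$, whence $M_n\le\cT^n\mathbf{0}$. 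Letting $n\to\infty$, dominated convergence (the mean rewards $r(x,a)$ are bounded and $\gamma<1$) gives $V_\Pi^{(n)}\to V_\Pi$, while $\cT^n\mathbf{0}\to\bar V$ by contraction, so $V_\Pi\le\bar V$ for every $\Pi$, and therefore $V^*\le\bar V$.

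Finally I would close the loop with the lower bound. Let $\pi^*$ be greedy with respect to $\bar V$, i.e. supported for each $x$ on the set of $a\in\cA_x$ maximizing $r(x,a)+\gamma\sum_yP(x,a,y)\bar V(y)$; then $r_{\pi^*}+\gamma P_{\pi^*}\bar V=\cT\bar V=\bar V$, so $\bar V$ is a fixed point of the $\pi^*$-evaluation map, and the uniqueness noted above forces $V_{\pi^*}=\bar V$. Hence $\bar V=V_{\pi^*}\le V^*\le\bar V$, so $V^*=\bar V=V_{\pi^*}$. This simultaneously delivers the optimality equation $V^*=\cT V^*$, that is $V^*(x)=\max_aQ^*(x,a)$ and the displayed recursion for $Q^*$ in (iii), since $Q^*(x,a)=r(x,a)+\gamma PV^*(x,a)$ by definition; assertion (i), because the greedy set for $\bar V=V^*$ is exactly $\argmax Q^*(x,\cdot)$, so condition \eqref{trruc} characterizes the greedy policies; the identity $Q_{\pi^*}=Q^*$ of (iii), from $Q_{\pi^*}(x,a)=r(x,a)+\gamma PV_{\pi^*}(x,a)=r(x,a)+\gamma PV^*(x,a)$; and the bound $Q_\pi\le Q^*$ of (ii), directly from $V_\pi\le V^*$ and the non-negativity of $P$.
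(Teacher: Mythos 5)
Your proof is correct and follows essentially the same route as the paper's: introduce the Bellman optimality operator $\cT$ and its fixed point, bound $V^*$ from above by finite-horizon truncation, evaluate a greedy policy via the contraction $v\mapsto r_{\pi^*}+\gamma P_{\pi^*}v$ to get the matching lower bound, and then read off (i)--(iii). The only (immaterial) difference is in the truncation step: you induct forward on the horizon using continuation policies and the monotonicity of $\cT$, whereas the paper performs a backward induction inside a fixed horizon $N$ by conditioning at intermediate times.
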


\blue The proof of Theorem~\ref{known} is recalled in Appendix \ref{qpkr} for the sake of completeness.
This shows that there exists an optimal policy $\pi^*$ which is SM. Naturally, the corresponding $Q_{\pi^*}$ 
is nothing but $Q^*$. 
Formula \eqref{trruc} explains that the SM policies which choose (at $x\in\cX$) some action $a\in\cA_x$ maximizing
$Q^*(x,\cdot)$ are optimal.
Since there exists an optimal policy which is SM, we may and will, from now on, consider only some SM policies.
\black
 
\subsection{A general class of algorithms}\label{gca}
We consider the following family of algorithms.
\begin{oframed}
\centerline{ \blue Algorithm 1 (general algorithm) \black}
\vip\vip
\noindent Start with some deterministic function $\hQ^0 : \cZ \to \rr$. 
For $k\geq 0$, assume that $k$ first episodes
$$
(X^i_0,A^i_0,X^i_1,R^i_1,A^i_1,X^i_2,R^i_2,A^i_2,\dots), \quad i=1,\dots,k,
$$
have already been built,
and consider the sigma-field $\cF^{k}$ generated by all these random variables (with $\cF^0=\{\emptyset,\Omega\}$).
We also introduce
$$
\cG^{i}_n=\sigma(X^{i}_0,A^{i}_0,X^{i}_1,R^{i}_1,A^{i}_1,\dots,X^{i}_n,R^{i}_n,A^{i}_n),
$$
so that $\cF^k=\lor_{i=1}^{k} \cG^i_\infty$.
Assume also that the $\cF^{k}$-measurable (random) function $\hQ^{k} : \cZ \to \rr$ has been built.
For some $\cF^{k}$-measurable (random) function $\e_{k}:\cX \to [0,1]$, we define the policy
\begin{equation}\label{gg1}
\hpi^{k}(x,a)= \frac{\e_{k}(x)}{|\cA_x|} + \indiq_{\{a\in \argmax \hQ^{k}(x,\cdot)\}} 
\frac{1-\e_{k}(x)}{|\argmax \hQ^{k}(x,\cdot)|}.
\end{equation}
For some $\cF^{k}$-measurable (random) probability $\nu_{k}$ on $\cZ$, we
build the $(k+1)$-th episode 
$$
(X^{k+1}_0,A^{k+1}_0,X^{k+1}_1,R^{k+1}_1,A^{k+1}_1,X^{k+1}_2,R^{k+1}_2,A^{k+1}_2,\dots)
$$
using $(X_0^{k+1},A^{k+1}_0)\sim \nu_{k}$ and then the SM policy $\hpi^k$, as in Subsection \ref{mod}.
For each $(x,a) \in \cZ$, we consider the $(\cG^{k+1}_n)_{n\geq 0}$-stopping-time 
$$
\tau^{k+1}_{x,a}=\inf \{ t \geq 0 : (X^{k+1}_t,A^{k+1}_t)=(x,a)\} \qquad \hbox{(convention: $\inf \emptyset=\infty$).}
$$
For all $(x,a) \in \cZ$, for 
some $\cF^{k} \lor \cG^{k+1}_{\tau^{k+1}_{x,a}}$-measurable 
random variable $\alpha_{k}(x,a) \in [0,1]$, we set
\begin{gather}\label{gg3}
\hQ^{k+1}(x,a)=(1-\alpha_{k}(x,a)\indiq_{\{\tau^{k+1}_{x,a}<\infty\}})\hQ^{k}(x,a) + 
\alpha_{k}(x,a)\indiq_{\{\tau^{k+1}_{x,a}<\infty\}}G^{k+1}_{x,a},\\
\label{gg2}\hbox{where}\qquad G^{k+1}_{x,a}= \sum_{t \in \nn} \gamma^t R^{k+1}_{\tau^{k+1}_{x,a}+t+1}
\qquad \hbox{\blue (convention : $G^{k+1}_{x,a}=0$ when $\tau^{k+1}_{x,a}=\infty$).\black}
\end{gather}
\end{oframed}

\blue
This algorithm depends on the choices of the initial function $\hQ^0$,
on the families of random functions $(\e_k)_{k\geq 0}$ (the exploration/exploitation trade-offs)
and $(\alpha_k)_{k\geq 0}$ (the learning rates), and of the family of 
probability measures $(\nu_k)_{k\geq 0}$ (the initial laws of the state/action).
Let us explain why $\hQ^k$ defined in Algorithm 1 might converge to  $Q^*$ 
(which implies that $V_{\hpi^k}$ converges to $V^*$).
For $q:\cZ \to \rr$, let us introduce the $q$-greedy policy
$$
\pi_q(x,a)=\indiq_{\{a \in \argmax q(x,\cdot)\}} \frac{1}{|\argmax q(x,\cdot)|}\qquad \hbox{for all $(x,a)\in\cZ$.}
$$
As is well-known, $Q^*$ is characterized by the identity $Q^* = Q_{\pi_{Q^*}}$, see Theorem \ref{known}.
Since we have $\E[G^{k+1}_{x,a}\mid \cF_k] = Q_{\hat\pi^k}(x,a)$ when $(x,a)$ is visited, 
we hope that \eqref{gg3} makes
$\hQ^{k+1}$ close to $Q_{\hat\pi^k}$. Since moreover $\hat\pi^k \simeq \pi_{\hQ^k}$ 
(with some additional small $\epsilon$ for exploration), we hope that if $\hQ=\lim_k \hQ^k$ exists, 
then $\hQ=Q_{\pi_{\hQ}}$, whence $\hQ=Q^*$.
\black

\vip

As we will see in the next subsection, for some specific choice of \blue the parameters, Algorithm 1 \black is 
the so-called first-visit algorithm.

\begin{thm}\label{mrg}
Assume that $\gamma \in [0,1/2)$. Consider \blue Algorithm 1 \black
and assume that
\begin{align}\label{cqdev}
\left\{\begin{array}{c}
\hbox{a.s., for all $(x,a)\in \cZ$,} \quad
\lim_{k\to \infty} \e_k(x)=0,\quad \sum_{k\geq 1} \alpha_k(x,a)\indiq_{\{\tau^{k+1}_{x,a}<\infty\}}=\infty, \\[8pt]
\hbox{and}\quad \sum_{k\geq 1} (\alpha_k(x,a))^2\indiq_{\{\tau^{k+1}_{x,a}<\infty\}}<\infty.
\end{array}\right.
\end{align}
Then a.s., $\lim_{k\to \infty} V_{\hpi^k}(x)=V^*(x)$ for all $x\in\cX$.
\end{thm}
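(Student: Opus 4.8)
The plan is to show that $\hQ^k \to Q^*$ almost surely, from which $V_{\hpi^k}\to V^*$ follows by a short argument using the continuity of $\pi\mapsto V_\pi$ near the greedy policies together with the characterization $V^*(x)=\max_a Q^*(x,a)$ from Theorem~\ref{known}(iii). The natural route is to deploy the abstract convergence result promised for Section~\ref{agp} in order to convert the stochastic-approximation recursion~\eqref{gg3} into a deterministic contraction analysis, and then to exploit the contraction Lemma~\ref{contr} with constant $\gamma/(1-\gamma)<1$. Concretely, I would first decompose~\eqref{gg3} into its ``signal'' and ``noise'' parts by writing, on the event $\{\tau^{k+1}_{x,a}<\infty\}$,
\begin{equation*}
\E[G^{k+1}_{x,a}\mid \cF^k] = Q_{\hpi^k}(x,a),
\end{equation*}
which holds by the strong Markov property at $\tau^{k+1}_{x,a}$ together with~\eqref{qpi} and the identity for $Q_\pi$ in Theorem~\ref{known}(ii); the martingale-difference remainder $G^{k+1}_{x,a}-Q_{\hpi^k}(x,a)$ has bounded conditional second moment (finite since $\gamma<1$, the rewards are $L^2$ by Setting~\ref{set}, and the state/action space is finite). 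This rewrites the update as $\hQ^{k+1}(x,a)=\hQ^k(x,a)+\alpha_k(x,a)\indiq_{\{\tau^{k+1}_{x,a}<\infty\}}(Q_{\hpi^k}(x,a)-\hQ^k(x,a)+\text{noise})$, a Robbins--Monro-type recursion driven by the step sizes controlled by~\eqref{cqdev}.

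The central issue is that the ``target'' $Q_{\hpi^k}$ is itself moving and depends on $\hQ^k$ through the greedy policy. The plan is to control the distance $\|\hQ^k-Q^*\|_\infty$ by comparing $Q_{\hpi^k}$ to $Q^*$. Here I would use that $\hpi^k$ is an $\e_k$-perturbation of the $\hQ^k$-greedy policy $\pi_{\hQ^k}$, and that $\e_k\to 0$, so asymptotically $\hpi^k\approx \pi_{\hQ^k}$. The key algebraic fact, to be supplied by Lemma~\ref{contr}, is an approximate contraction: $\|Q_{\pi_{\hQ^k}}-Q^*\|_\infty \le \frac{\gamma}{1-\gamma}\|\hQ^k-Q^*\|_\infty + o(1)$, where the appearance of $\gamma/(1-\gamma)$ rather than $\gamma$ comes precisely from the fact that a single greedy step with respect to the \emph{approximate} $\hQ^k$ (rather than a full Bellman backup) amplifies errors, and this is the sole place where the restriction $\gamma<1/2$ is used. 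Feeding this into the abstract stochastic-approximation theorem then yields $\limsup_k\|\hQ^k-Q^*\|_\infty \le \frac{\gamma}{1-\gamma}\limsup_k\|\hQ^k-Q^*\|_\infty$, forcing the $\limsup$ to be $0$.

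To make the abstract machinery applicable I must verify that every pair $(x,a)$ is updated infinitely often, which is where the exploration hypotheses enter: the condition $\sum_k \alpha_k(x,a)\indiq_{\{\tau^{k+1}_{x,a}<\infty\}}=\infty$ encodes both persistent visiting and non-summable learning, while $\sum_k\alpha_k(x,a)^2\indiq_{\{\tau^{k+1}_{x,a}<\infty\}}<\infty$ controls the accumulated noise via the Robbins--Monro lemma recalled in Appendix~\ref{qpkr}. I would also record a uniform a.s. bound on $\sup_k\|\hQ^k\|_\infty$ (the iterate stays in a compact set because it is a convex combination of its previous value and the bounded returns $G^{k+1}_{x,a}$, whose sup over episodes is controlled since $|G^{k+1}_{x,a}|\le \sum_{t}\gamma^t|R|$ with $L^2$ rewards), so that all the $o(1)$ terms coming from $\e_k\to 0$ are genuinely uniform.

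\textbf{The main obstacle} I anticipate is establishing the contraction of Lemma~\ref{contr} with the sharp constant $\gamma/(1-\gamma)$, and correctly propagating the $\e_k$-exploration error through it: unlike in the $Q$-learning analysis, the update uses the Monte-Carlo return under the \emph{current} policy $\hpi^k$ rather than a bootstrapped Bellman operator, so the operator $\hQ^k\mapsto Q_{\hpi^k}$ is not the usual $\gamma$-contractive Bellman optimality operator, and one must carefully bound how much the value function $V_{\hpi^k}$ of the perturbed greedy policy can differ from $V^*$ in terms of $\|\hQ^k-Q^*\|_\infty$. This is exactly the step that breaks when $\gamma>1/2$, and it is where the counterexample of Section~\ref{counterex} bites; getting the constant right, and ensuring the error analysis is uniform over the finite (but policy-dependent) structure of argmax ties, will require the most care.
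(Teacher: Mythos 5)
Your overall architecture --- reduce the theorem to $\hQ^k\to Q^*$, rewrite \eqref{gg3} as a stochastic-approximation recursion whose drift is $Q_{\hpi^k}-\hQ^k$ (via the strong Markov property at $\tau^{k+1}_{x,a}$), then combine Lemma \ref{contr} with the abstract result of Section \ref{agp} --- is exactly the paper's. But the key inequality you attribute to Lemma \ref{contr} is not what that lemma provides, and it is in fact false. The lemma reads
\[
\|\cH(\e,q)-Q^*\|_\infty \;\le\; \frac{\gamma}{1-\gamma}\Big(\|Q^*-q\|_\infty+\|(q-Q^*)_+\|_\infty+2\|Q^*\|_\infty\|\e\|_\infty\Big),
\]
with a separate \emph{overshoot} term $\|(q-Q^*)_+\|_\infty$ that cannot be absorbed into your $o(1)$: nothing a priori forces $(\hQ^k-Q^*)_+$ to vanish. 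The term is also not removable from the lemma. Already in the one-state, two-action MDP of Proposition \ref{ptb}, taking $q(\zz,1)=Q^*(\zz,1)-\delta$ and $q(\zz,0)=Q^*(\zz,0)+\delta$ with $\delta\in(1/2,1)$ makes the $q$-greedy policy choose action $0$, and then $\|Q_{\pi_q}-Q^*\|_\infty=\gamma/(1-\gamma)>\frac{\gamma}{1-\gamma}\,\delta=\frac{\gamma}{1-\gamma}\|q-Q^*\|_\infty$, contradicting the inequality you want to feed into the fixed-point argument. If one only bounds the overshoot by the full norm, the effective contraction constant becomes $2\gamma/(1-\gamma)$, and your closing step $\limsup_k\|\hQ^k-Q^*\|_\infty\le\rho\limsup_k\|\hQ^k-Q^*\|_\infty$ proves nothing unless $\gamma<1/3$. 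The missing idea, which is the heart of the paper's proof, is the \emph{one-sided} bound $\cH(\e,q)\le Q^*$ (property \eqref{ccstarstar}: no policy's $Q$-function exceeds $Q^*$). In Proposition \ref{gprop} this is used to compare $\hQ^k-Q^*$, by induction, with the pure-noise recursion $W_{k+1}(y)=(1-\lambda_k(y))W_k(y)+\lambda_k(y)\xi_{k+1}(y)$, which tends to $0$ by the Robbins--Monro Lemma \ref{rm}; hence $\hQ^k-Q^*\le W_k$ and $\|(\hQ^k-Q^*)_+\|_\infty\to0$ a.s. Only after the overshoot has been killed this way does the contraction with $\rho=\gamma/(1-\gamma)<1$, i.e. $\gamma<1/2$, suffice.

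Two further problems, both fixable but genuinely present. First, your martingale decomposition conditions on $\cF^k$, but in Algorithm 1 neither $\alpha_k(x,a)$ nor $\indiq_{\{\tau^{k+1}_{x,a}<\infty\}}$ is $\cF^k$-measurable (in the FVA, $\alpha_k(x,a)=1/N_{k+1}(x,a)$ involves the $(k+1)$-th episode), so the step sizes of your Robbins--Monro recursion are not adapted to your filtration and Lemma \ref{rm} cannot be invoked as you state it. The paper must work with the state-dependent $\sigma$-fields $\cI^k_z=\cF^k\lor\cG^{k+1}_{\tau^{k+1}_{x,a}}$, with respect to which $\lambda_k(z)$ \emph{is} measurable, and then uses the strong Markov property at the stopping time $\tau^{k+1}_{x,a}$ to get $\E[\xi_{k+1}(z)\mid\cI^k_z]=0$. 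Second, your justification of $\sup_k\|\hQ^k\|_\infty<\infty$ (``convex combination of \dots bounded returns'') is incorrect: Setting \ref{set} only assumes square-integrable rewards, so the returns $G^{k+1}_{x,a}$ are unbounded random variables and the iterates are not confined to any deterministic compact set. Almost-sure boundedness is again a consequence of the comparison with $W_k$ and the contraction (Step 3 of the proof of Proposition \ref{gprop}), not of boundedness of the returns.
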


\blue 
In \cite{t}, Tsitsiklis has shown a similar result for every $\gamma\in[0,1)$, when $\alpha_k(x,a)$ 
does not depend on $(x,a)$ and
under the assumption that all the states/actions are visited at each episode. 
His class of algorithm does not contain the FVA.
\black

\vip

Although this is not very convincing as far as the first-visit algorithm is concerned,
the condition that $\gamma \in [0,1/2)$, or at least that $\gamma \in [0,1/2]$, 
is necessary in the above theorem. This is strongly inspired by Example 5.12 in cite \cite{bt}.

\begin{prop}\label{ptb}
Consider the simplest possible Markov decision process: take a state space $\cX=\{\zz\}$ with one element,
take $\cA_0=\{0,1\}$, so that necessarily $P(\zz,0,\zz)=P(\zz,1,\zz)=1$,
and take $S(\zz,0,\zz,\cdot)=\delta_0$ and $S(\zz,1,\zz,\cdot)=\delta_1$. 
If $\gamma\in (1/2,1)$, we can design the parameters of \blue Algorithm 1 \black
(with in particular $\e_k(\zz)=0$ for all $k\geq 0$)
such that \eqref{cqdev} holds true
but $\lim_{k\to \infty} V_{\hpi^k}(\zz)$ does a.s. not exist.
\end{prop}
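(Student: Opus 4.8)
The plan is to exploit the single-state structure so that everything becomes deterministic, and to arrange a ``chattering'' of the greedy policy around a threshold that is crossable exactly when $\gamma>1/2$. Write $u_k=\hQ^k(\zz,0)$ and $v_k=\hQ^k(\zz,1)$. Since $P(\zz,\cdot,\zz)=1$ and the rewards are deterministic ($S(\zz,0,\zz,\cdot)=\delta_0$, $S(\zz,1,\zz,\cdot)=\delta_1$), one computes directly from Theorem~\ref{known} that $V^*(\zz)=1/(1-\gamma)$, with $Q^*(\zz,0)=\gamma/(1-\gamma)$ and $Q^*(\zz,1)=1/(1-\gamma)$. Because $\e_k(\zz)=0$, the policy $\hpi^k$ is purely greedy: it plays action $1$ when $v_k>u_k$ and action $0$ when $u_k>v_k$, and correspondingly $V_{\hpi^k}(\zz)=1/(1-\gamma)$ in the first case and $V_{\hpi^k}(\zz)=0$ in the second. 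So it will suffice to produce parameters for which $u_k-v_k$ changes sign infinitely often.

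First I would record the relevant targets $G^{k+1}_{\zz,a}$, which, rewards being deterministic, are themselves deterministic functions of the current greedy action and of the starting action chosen through $\nu_k$. When the greedy action is $1$ and one starts from $(\zz,0)$, the visit to $(\zz,0)$ yields $G_{\zz,0}=0+\gamma+\gamma^2+\cdots=\gamma/(1-\gamma)$; when the greedy action is $0$, any visit to $(\zz,0)$ yields $G_{\zz,0}=0$, and a start from $(\zz,1)$ yields $G_{\zz,1}=1$. I would then fix $u_0=0$, $v_0=1$ and define the parameters episode by episode: whenever $u_k<1$ (greedy action $1$) choose $\nu_k=\delta_{(\zz,0)}$, $\alpha_k(\zz,0)=1/(k+1)$, $\alpha_k(\zz,1)=0$; whenever $u_k>1$ (greedy action $0$) choose $\nu_k=\delta_{(\zz,1)}$, $\alpha_k(\zz,0)=1/(k+1)$, and $\alpha_k(\zz,1)=1/m$, where $m$ counts the greedy-$0$ episodes so far. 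These choices are $\cF^k$-measurable and $\e_k(\zz)=0$. The recursion then becomes purely deterministic: $v_k\equiv1$ (its only possible target is $1$), while
\[
u_{k+1}=\Big(1-\tfrac{1}{k+1}\Big)u_k+\tfrac{1}{k+1}\,\tfrac{\gamma}{1-\gamma}\quad\text{if }u_k<1,\qquad u_{k+1}=\Big(1-\tfrac{1}{k+1}\Big)u_k\quad\text{if }u_k>1.
\]

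The heart of the argument is a one-dimensional chattering lemma: $u_k<1$ for infinitely many $k$ and $u_k>1$ for infinitely many $k$. This is where $\gamma>1/2$ enters, through $\gamma/(1-\gamma)>1$. Indeed, if $u_k<1$ for all $k\ge K$, then each step increases $u_k$ by at least $\frac{1}{k+1}\big(\frac{\gamma}{1-\gamma}-1\big)>0$, and since $\sum 1/(k+1)=\infty$ this forces $u_k\to\infty$, a contradiction; conversely, if $u_k>1$ for all $k\ge K$, then $u_k=u_K\prod_{j=K}^{k-1}(1-\frac{1}{j+1})=u_K\,K/k\to0$, again a contradiction. (I would arrange $u_k\neq1$ for all $k$, so that the greedy action is unambiguous; this holds for the above data, or can be forced by an arbitrarily small perturbation of $u_0$.) Hence the greedy action, and therefore $V_{\hpi^k}(\zz)$, takes the value $1/(1-\gamma)$ infinitely often and the value $0$ infinitely often, so $\lim_k V_{\hpi^k}(\zz)$ does not exist. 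The subtle point worth stressing is that $u_k$ in fact \emph{converges} to $1$; it is the discontinuity of the greedy map at $u=v$ that makes the \emph{policy} chatter even though the $\hQ$-values settle.

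Finally I would check the hypotheses \eqref{cqdev}. Exploration is trivial since $\e_k(\zz)=0$. For the learning rates, $(\zz,0)$ is visited at every episode with $\alpha_k(\zz,0)=1/(k+1)$, so $\sum_k\alpha_k(\zz,0)\indiq_{\{\tau^{k+1}_{\zz,0}<\infty\}}=\sum_k 1/(k+1)=\infty$ and the squares are summable; while $(\zz,1)$ is updated exactly on the greedy-$0$ episodes—infinitely many, by the chattering lemma—with rates $1/m$, so $\sum_m 1/m=\infty$ and $\sum_m 1/m^2<\infty$. The main obstacle is precisely this chattering lemma, together with the realization that one \emph{wants} the $\hQ$-values to converge (to the threshold) rather than to oscillate: it is exactly this that makes the construction compatible with the Robbins--Monro conditions, which are designed to rule out oscillation of the iterate itself but say nothing about the discontinuous policy map.
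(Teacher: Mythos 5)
Your construction is otherwise sound and genuinely simpler than the paper's: by choosing the rates so that $v_k\equiv 1$ is pinned, you reduce everything to a one-dimensional deterministic chattering of $u_k=\hQ^k(\zz,0)$ around the threshold $1$, driven by the divergence of the harmonic series and by $\gamma/(1-\gamma)>1$. The paper instead lets both coordinates move and cycles $\hQ^k$ through four zones $Z_1\to Z_2\to Z_3\to Z_4\to Z_1$; a notable structural difference is that in your scheme $\hQ^k$ actually \emph{converges} (to $(1,1)$) while the policy oscillates, whereas in the paper's scheme $\hQ^k$ itself cycles forever. Your computation of the returns $G^{k+1}_{\zz,a}$, the chattering lemma, and the verification of \eqref{cqdev} are all correct.

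However, there is a genuine gap at exactly the point where the paper spends its main technical care (the rationality trick and Remark \ref{rkHH}): you must guarantee $u_k\neq 1$ for \emph{every} $k$. If $u_k=1=v_k$, then $\argmax \hQ^k(\zz,\cdot)=\{0,1\}$, the policy $\hpi^k$ is the uniform randomization, your case prescription (``whenever $u_k<1$ \dots whenever $u_k>1$ \dots'') does not even define $\nu_k$ and $\alpha_k$, and the dynamics cease to be the deterministic recursion you analyze. Your first claim, that the data $u_0=0$, $v_0=1$ avoids this, is false: take $\gamma=2/3$. Then $\alpha_0(\zz,0)=1$ gives $u_1=\gamma/(1-\gamma)=2>1$, and the next step gives $u_2=\bigl(1-\tfrac12\bigr)\cdot 2=1$ exactly. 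Your fallback claim, that an arbitrarily small perturbation of $u_0$ repairs this, is also false for your choice of rates: since $\alpha_0(\zz,0)=1/(0+1)=1$, the very first update erases $u_0$ (every $u_0<1$ produces the same $u_1$), so all perturbed trajectories hit $u_2=1$ as well. The gap is repairable: take for instance $\alpha_k(\zz,0)=1/(k+2)$, which leaves \eqref{cqdev} and the chattering lemma intact; then for each $k$ and each fixed sign pattern of $(u_j)_{j<k}$ the map $u_0\mapsto u_k$ is affine with slope $\prod_{j=0}^{k-1}\frac{j+1}{j+2}=\frac{1}{k+1}>0$, so the set of initial values $u_0$ leading to $u_k=1$ for some $k$ is countable, and any $u_0\in(0,1)$ outside this countable set works. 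But this argument (or some substitute, such as the paper's rationality/$\HH$ device) must actually be supplied; as written, the proof is incomplete precisely at the delicate step.
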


\blue 
Let us emphasize that although $\epsilon_k=0$, \eqref{cqdev} is satisfied: all the 
states/actions are visited an infinite
number of times. The non-convergence is not due to a lack of exploration.
\black

\vip

\blue We do not know what happens when $\gamma=1/2$, but it is possible that our proof might be adapted with a 
lot of work, in the spirit of Liu \cite{l}, who extended the result of Tsitsiklis \cite{t} 
(concerning another algorithm and relying on a $\gamma$-contraction, when $\gamma \in [0,1)$) 
to the case $\gamma=1$,
using some monotony arguments. However, the case $\gamma=1/2$ is not particularly interesting and does not warrant
a special treatment. \black

\subsection{The first-visit algorithm}
The algorithm presented in this section is very classical in reinforcement learning,
see Singh-Sutton \cite{ss} and Sutton-Barto \cite{sb}.

\begin{oframed}
\centerline{ \blue Algorithm 2 (first visit algorithm) \black}
\vip\vip
\noindent We consider $\theta>0$ and a law $\mu_0$ on $\cX$. We set $\bQ^0(x,a)=0$ for all $(x,a)\in\cZ$.
For $k\geq 0$, assume that $k$ first episodes
$$
(X^i_0,A^i_0,X^i_1,R^i_1,A^i_1,X^i_2,R^i_2,A^i_2,\dots), \quad i=1,\dots,k,
$$
have been built, as well as the function $\bQ^{k}:\cZ\to\rr$. For $(x,a)\in\cZ$, set
\begin{gather}\label{fv0}
N_k(x,a)=\sum_{i=1}^k \indiq_{\{\tau^i_{x,a}<\infty\}} \qquad \hbox{and} \quad N_k(x)=\sum_{a \in \cA_x} N_k(x,a)
\quad \hbox{(convention: $\sum_{i=1}^0=0$),}\\
\hbox{where} \qquad \tau^i_{x,a}=\inf\{t\geq 0 : (X^i_t,A^i_t)=(x,a)\}\qquad \hbox{(convention: 
$\inf \emptyset=\infty$).}\notag
\end{gather}
Consider the SM policy defined, for $(x,a)\in\cZ$, by
\begin{equation}\label{fv1}
\bpi^{k} (x,a)= \frac{\e_k(x)}{|\cA_x|} + \indiq_{\{a \in \argmax \bQ^k(x,\cdot)\}} 
\frac{1-\e_k(x)}
{|\argmax \bQ^k(x,\cdot)|},\; \hbox{where}\; \e_k(x)=\frac1{(1+N_k(x))^{\theta}}.
\end{equation}

We then build the $(k+1)$-th episode 
$$
(X^{k+1}_0,A^{k+1}_0,X^{k+1}_1,R^{k+1}_1,A^{k+1}_1,X^{k+1}_2,R^{k+1}_2,A^{k+1}_2,\dots )
$$
using $X^{k+1}_0\sim \mu_0$ and the policy $\bpi^{k}$, as in Subsection \ref{mod}.
We then compute, for each $(x,a)\in \cZ$,
\begin{gather}\label{fv2}
\bQ^{k+1}(x,a)=\frac{\sum_{i=1}^{k+1} \indiq_{\{\tau^i_{x,a}<\infty\}}G^i_{x,a}}
{N_{k+1}(x,a)} \qquad \hbox{(convention: $\frac00=0$)},\\
\hbox{where} \qquad G^{i}_{x,a}= \sum_{t \in \nn} \gamma^t R^{i}_{\tau^{i}_{x,a}+t+1}
\qquad \hbox{(convention : $G^{i}_{x,a}=0$ when $\tau^i_{x,a}=\infty$).}
\end{gather}
\end{oframed}

\blue As we will see in Section \ref{convfv}, Algorithm 2 is
a particular case of Algorithm 1, namely when choosing $\hQ^0=0$ and, for all $k\geq 0$ and $(x,a)\in\cZ$, 
$\nu_k(x,a)=\mu_0(x)\hpi^k(x,a)$, $\e_k(x)=(1+N_k(x))^{-\theta}$ and $\alpha_k(x,a)=(N_{k+1}(x,a))^{-1}$.
When $N_{k+1}(x,a)=0$, $\alpha_k(x,a)$ is not defined but this is not an issue as Algorithm 1 uses
$\alpha_k(x,a)$ only when $N_{k+1}(x,a)>0$. \black

\vip

\blue

Here is a pseudo-code for Algorithm 2:
here $Q(x,a)$ (resp $n(x,a)$, $N(x)$, $\pi(x,a)$, $\e(x)$) represents $\bar Q^k(x,a)$ 
(resp. $N_k(x,a)$, $N_k(x)$, $\bar \pi^k(x,a)$, $\e_k(x)$).

\blue
\begin{oframed}
\centerline{Pseudo-code for Algorithm 2}
\vip\vip
{\tt
set $k=0$
\vip
for all $(x,a) \in\cZ$, set $Q(x,a)=0$, $n(x,a)=0$, $N(x)=0$
\vip
do \hskip0.35cm for all $x \in \cX$, set $\e(x)=(1+N(x))^{-\theta}$
\vip
...  for all $(x,a) \in\cZ$, set 
$\pi(x,a)=\frac{\e(x)}{|\cA_x|} + \indiq_{\{a \in \argmax Q(x,\cdot)\}} \frac{1-\e(x)}{|\argmax Q(x,\cdot)|}$ 
\vip
... simulate $X_0,A_0,X_1,R_1,A_1,X_2,R_2,A_2,\dots$ with $X_0\sim\mu_0$ using the policy $\pi$
\vip
... for all $(x,a)\in \cZ$, if there is $t\in \nn$ such that $(X_t,A_t)=(x,a)$
\vip
... ... find $\tau=\min\{t \in \nn : (X_t,A_t)=(x,a)\}$
\vip
... ... set $G=\sum_{s\in \nn}\gamma^s R_{\tau+s}$ and $Q(x,a)=(n(x,a)Q(x,a)+G)/(n(x,a)+1)$
\vip
... ... set $n(x,a)=n(x,a)+1$ and $N(x)=N(x)+1$
\vip
... set $k=k+1$
\vip
until $k$ has reached the desired value
}
\end{oframed}

\black

\begin{thm}\label{mmrr}
\blue Consider the notation introduced in Algorithm 2. \black Assume that $\gamma \in [0,1/2)$. 
If $\theta\in (0,1]$ and $\mu_0(x)>0$ for all $x\in \cX$, then a.s., 
$\lim_k V_{\bpi^k}(x)=V^*(x)$ for all $x\in \cX$.
\end{thm}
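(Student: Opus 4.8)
The plan is to deduce Theorem~\ref{mmrr} as a direct corollary of the general convergence result Theorem~\ref{mrg}, by checking that Algorithm~2 is genuinely an instance of Algorithm~1 and that its parameters satisfy the hypothesis \eqref{cqdev}. As noted in the excerpt, Algorithm~2 corresponds to the choices $\hQ^0=0$, $\nu_k(x,a)=\mu_0(x)\hpi^k(x,a)$, $\e_k(x)=(1+N_k(x))^{-\theta}$, and $\alpha_k(x,a)=(N_{k+1}(x,a))^{-1}$; the first step I would carry out is to verify carefully that the recursive averaging formula \eqref{fv2} is precisely the update \eqref{gg3} with this learning rate. Indeed, writing $N_{k+1}(x,a)=N_k(x,a)+\indiq_{\{\tau^{k+1}_{x,a}<\infty\}}$, one checks that on the event $\{\tau^{k+1}_{x,a}<\infty\}$ the empirical mean $\bQ^{k+1}$ satisfies $\bQ^{k+1}(x,a)=(1-\alpha_k(x,a))\bQ^k(x,a)+\alpha_k(x,a)G^{k+1}_{x,a}$ with $\alpha_k(x,a)=1/N_{k+1}(x,a)$, while on the complementary event $\bQ^{k+1}(x,a)=\bQ^k(x,a)$; this matches \eqref{gg3} exactly. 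I would also confirm the measurability requirements, in particular that $\e_k$ and $\nu_k$ are $\cF^k$-measurable and that $\alpha_k(x,a)$ is $\cF^k\lor\cG^{k+1}_{\tau^{k+1}_{x,a}}$-measurable (which holds since $N_{k+1}(x,a)$ depends only on the visits, observable by the stopping time).

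Once the identification is established, the real content is to verify the three conditions in \eqref{cqdev} almost surely, for all $(x,a)\in\cZ$. The key quantitative fact needed is that every state is visited infinitely often: because $\mu_0(x)>0$ for every $x$ and because $\e_k(x)=(1+N_k(x))^{-\theta}>0$ forces $\hpi^k$ to select every action in $\cA_x$ with positive probability, each episode has a uniformly positive chance (bounded below on $\{N_k(x)\le m\}$ for each fixed $m$) of visiting any given $(x,a)$ at least once. A Borel--Cantelli type argument (using the conditional second Borel--Cantelli lemma, i.e. Lévy's extension) then shows that a.s. $N_k(x,a)\to\infty$ for all $(x,a)$. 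Granting this, the three conditions follow: first, $N_k(x)\to\infty$ gives $\e_k(x)=(1+N_k(x))^{-\theta}\to0$; second, with $\alpha_k(x,a)\indiq_{\{\tau^{k+1}_{x,a}<\infty\}}=1/N_{k+1}(x,a)$ summed only over the visiting episodes, the partial sums telescope into the harmonic-type series $\sum_{n\ge1}1/n=\infty$, so the series diverges; and third, the squared terms give $\sum_{n\ge1}1/n^2<\infty$, so the second series converges. These last two are clean because along the subsequence of visits the index $N_{k+1}(x,a)$ increases by exactly one each time.

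The main obstacle I expect is the infinite-visits claim, namely showing $N_k(x,a)\to\infty$ a.s.\ for \emph{all} $(x,a)$ simultaneously, since this is where the probabilistic structure genuinely enters. The difficulty is that $\e_k(x)$ decays to zero, so the exploration probability of each non-greedy action shrinks, and one must rule out the possibility that some $(x,a)$ is starved of visits because $\e_k$ vanishes too fast relative to how often $x$ itself is reached. The resolution hinges on the coupling between $\e_k(x)$ and $N_k(x)$: the exploration rate at $x$ is tied to the number of visits to $x$, not to the total number of episodes. I would argue that whenever $x$ is visited (which happens infinitely often, starting from the fact that $X^{k+1}_0\sim\mu_0$ with $\mu_0(x)>0$ reaches $x$ directly at step $0$ with probability $\mu_0(x)$ every episode), the action $a$ is chosen with probability at least $\e_k(x)/|\cA_x|=(1+N_k(x))^{-\theta}/|\cA_x|$, and since $N_k(x)$ grows at most linearly in $k$ while $\theta\le1$, the resulting conditional probabilities $p_k$ of a fresh visit to $(x,a)$ satisfy $\sum_k p_k=\infty$. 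The conditional Borel--Cantelli lemma then forces infinitely many visits. Handling the constraint $\theta\le1$ precisely here is the crux: this is exactly the regime in which the lower bound $(1+N_k(x))^{-\theta}\gtrsim (1+k)^{-1}$ keeps the divergence $\sum_k p_k=\infty$ intact, which is why the hypothesis $\theta\in(0,1]$ appears in the statement.
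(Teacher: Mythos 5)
Your proposal is correct and follows essentially the same route as the paper's proof: identify Algorithm~2 as the instance of Algorithm~1 with $\alpha_k(x,a)=(N_{k+1}(x,a))^{-1}$, reduce \eqref{cqdev} to showing $N_k(x,a)\to\infty$ a.s., and obtain this by lower-bounding the conditional probability $\PP(X^{k+1}_0=x,A^{k+1}_0=a\mid\cF^k)\geq \mu_0(x)\e_k(x)/|\cA_x|\gtrsim (1+k)^{-\theta}$, which is summable to infinity precisely because $\theta\in(0,1]$. The only cosmetic difference is that where you invoke the conditional (L\'evy) second Borel--Cantelli lemma, the paper proves that step by hand via the martingale $S_k-\Lambda_k$ and optional stopping.
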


\blue
We believe that the condition $\gamma<1/2$ is not necessary for this result to hold true. If e.g. one assumes 
that the MDP is {\it optimal policy feed forward} (meaning that a state cannot be visited twice under 
any optimal policy), the convergence has been shown by Wang-Yuan-Shao-Ross \cite{wysr} 
for any $\gamma \in (0,1]$. However, since
our proof relies on Theorem~\ref{mrg}, which is false when $\gamma>1/2$ by Proposition~\ref{ptb}, there is no
hope that our method extends to the case $\gamma>1/2$, and we have no idea to treat this case. However, it seems
that Theorem~\ref{mmrr} is the first result showing the convergence of the FVA without any structural condition on 
the underlying MDP.

\subsection{A more realistic situation} \label{smrs}

In the previous subsection, we were using some episodes with infinite length. This is of course not realistic.
We now consider the following common situation. For $x,y\in\cX$, we set 
\blue $\bar P(x,y)=\max_{a\in\cA_x}P(x,a,y)$.\black

\begin{assumption}\label{tri}
There is a subset $\triangle$ of $\cX$ such that $S(x,a,y,\dd z)=\delta_0(\dd z)$ 
for all $x\in \Delta$, all $a \in\cA_x$, all $y \in \cX$
and such that $P(x,a,\triangle)=1$ for all $x\in \triangle$, all $a\in \cA_x$.
Moreover, for all $x \in\cX$, there are $y \in \triangle$ and $n\geq 0$ such that
\blue $\bar P^n(x,y)>0$. \black
\end{assumption}

\blue Here $\bar P^n$ is the $n$-th (matrix) power of $\bar P$. \black
In other words, the reward is identically equal to $0$ when the process lies in $\triangle$, whatever the action. 
Moreover, when the process reaches $\triangle$, it remains stuck in $\triangle$ forever.
Finally, it is possible to reach $\triangle$ from anywhere, at least
if choosing suitably the actions.

\vip

\blue 
Consider any game with one player, with a finite number of states and a finite number of 
possible actions on each state. Call $\triangle$ the set of terminal states (where the game is over). 
Assumption \ref{tri} is satisfied as soon as from any state, there exists a policy under which is 
game ends in finite time. This is the case for almost all real games.
\black

\vip

We will check the following.

\begin{rk}\label{any} 
Grant Assumption \ref{tri} 
\vip
(i) Consider two SM policies $\pi,\pi'$ such that $\pi(x,a)=\pi'(x,a)$ for all $x\in\cX\setminus\{\triangle\}$,
all $a \in \cA_x$. Then $V_{\pi}(x)=V_{\pi'}(x)$ for all $x\in\cX$.

\vip
(ii) Consider any \emph{positive} SM policy $\pi$, i.e. such that
$\pi(x,a)>0$ for all $(x,a) \in \cZ$. Then
$\PP_{x,\pi}(T_\triangle <\infty)=1$ for all $x\in \cX$, where $T_\triangle=\inf\{t\geq 0 : X_t \in\triangle\}$.
\end{rk}

\blue In this case, Algorithm 2 may be rewritten as Algorithm 3 below, in the following sense. \black

\begin{rk}\label{plur}
Grant Assumption \ref{tri}. \blue Consider Algorithm 3,
using the same random elements as in Algorithm 2. \black
It holds that $\tpi^k(x,a)=\bpi^k(x,a)$ for all $k\geq 0$, all $x\in\cX\setminus\{\triangle\}$,
all $a \in \cA_x$. Hence $V_{\tpi^k}(x)=V_{\bpi^k}(x)$ for all $x \in\cX$ by Remark \ref{any}-(i).
\end{rk}

\begin{oframed}
\centerline{\blue Algorithm 3 (First visit algorithm with finite episodes) \black}
\vip\vip
\noindent We consider $\theta>0$ and a law $\mu_0$ on $\cX$. We set $\tQ^0(x,a)=0$ for all $(x,a)\in\cZ$.
For $k\geq 0$, assume that $k$ first finite episodes
$$
(X^i_0,A^i_0,X^i_1,R^i_1,A^i_1,\dots,X^i_{T^i_{\triangle-1}},R^i_{T^i_{\triangle-1}},A^i_{T^i_{\triangle-1}},X^i_{T^i_{\triangle}}), 
\quad i=1,\dots,k,
$$
have been built, with $T^i_\triangle=\inf\{t\geq 0 : X^i_t \in\triangle\}$, as 
well as the function $\tQ^{k}:\cZ\to\rr$. 
For $(x,a)\in\cZ$, we set
\begin{gather}\label{fvf0}
\tN_k(x,a)=\sum_{i=1}^k \indiq_{\{\ttau^i_{x,a}<\infty\}} \quad \hbox{and} \qquad \tN_k(x)=\sum_{a \in \cA_x} \tN_k(x,a)
\quad \hbox{(convention: $\sum_{i=1}^0=0$),}\\
\hbox{where} \qquad \ttau^i_{x,a}=\inf\{t\in \ig 0,T^i_\triangle\id : (X^i_t,A^i_t)=(x,a)\}\qquad \hbox{(convention: 
$\inf \emptyset=\infty$).}\notag
\end{gather}
Consider the SM policy defined, for $(x,a)\in\cZ$, by
\begin{equation}\label{fvf1}
\tpi^{k} (x,a)= \frac{\te_k(x)}{|\cA_x|} + \indiq_{\{a \in \argmax \tQ^k(x,\cdot)\}} 
\frac{1-\te_k(x)}
{|\argmax \tQ^k(x,\cdot)|},\; \hbox{where}\; \te_k(x)=\frac1{(1+\tN_k(x))^{\theta}}.
\end{equation}
We then build the $(k+1)$-th episode \blue
$$
(X^{k+1}_0,A^{k+1}_0,X^{k+1}_1,R^{k+1}_1,A^{k+1}_1,\dots,X^{k+1}_{T^{k+1}_{\triangle}-1},
R^{k+1}_{T^{k+1}_{\triangle}-1},A^{k+1}_{T^{k+1}_{\triangle}-1},X^{k+1}_{T^{k+1}_{\triangle}},R^{k+1}_{T^{k+1}_{\triangle}})
$$
\black using $X^{k+1}_0\sim \mu_0$ and the policy $\tpi^{k}$, as in Subsection \ref{mod}.
We then compute, for each $(x,a)\in \cZ$,
\begin{gather}\label{fvf2}
\tQ^{k+1}(x,a)=\frac{\sum_{i=1}^{k+1} \indiq_{\{\ttau^i_{x,a}<\infty\}}\tG^i_{x,a}}
{\tN_{k+1}(x,a)} \qquad \hbox{(convention: $\frac00=0$)},\\
\label{fvf3}\hbox{where} \qquad \tG^{i}_{x,a}= \blue \sum_{t =0}^{T^i_\triangle-\ttau^i_{x,a}-1} 
\black \gamma^t R^{i}_{\ttau^{i}_{x,a}+t+1}
\qquad \hbox{(convention : $\tG^{i}_{x,a}=0$ when $\ttau^i_{x,a}=\infty$).}
\end{gather}
\end{oframed}

\blue

Here is a pseudo-code for Algorithm 3:
here $Q(x,a)$ (resp $n(x,a)$, $N(x)$, $\pi(x,a)$, $\e(x)$) represents $\tilde Q^k(x,a)$ 
(resp. $\tilde N_k(x,a)$, $\tilde N_k(x)$, $\tilde \pi^k(x,a)$, $\tilde \e_k(x)$).

\begin{oframed}
\centerline{Pseudo-code for Algorithm 3}
\vip\vip

{\tt
set $k=0$
\vip
for all $(x,a) \in\cZ$, set $Q(x,a)=0$, $n(x,a)=0$, $N(x)=0$
\vip
do \hskip0.35cm for all $x \in \cX$, set $\e(x)=(1+N(x))^{-\theta}$
\vip
...  for all $(x,a) \in\cZ$, set 
$\pi(x,a)=\frac{\e(x)}{|\cA_x|} + \indiq_{\{a \in \argmax Q(x,\cdot)\}} \frac{1-\e(x)}{|\argmax Q(x,\cdot)|}$ 
\vip
... simulate $X_0,A_0,X_1,R_1,A_1,X_2,R_2,A_2,\dots,X_{T-1},R_{T-1},A_{T-1},X_T,R_T$ with

\hskip0.95cm  $X_0\sim\mu_0$ and using the policy $\pi$ until $X$ reaches $\triangle$ (at time $T$).
\vip
... for all $(x,a)\in \cZ$, if there is $t\in \ig 0,T \id$ such that $(X_t,A_t)=(x,a)$
\vip
... ... find $\tau=\min\{t\in \ig 0,T \id : (X_t,A_t)=(x,a)\}$
\vip
... ... set $G=\sum_{t=0}^{T-\tau-1}\gamma^t R_{\tau+t+1}$ and $Q(x,a)=(n(x,a)Q(x,a)+G)/(n(x,a)+1)$
\vip
... ... set $n(x,a)=n(x,a)+1$ and $N(x)=N(x)+1$
\vip
... set $k=k+1$
\vip
until $k$ has reached the desired value
}
\end{oframed}

\black

\section{Main contraction result}\label{cont}

The following contraction estimate is the key of our study.

\begin{lem}\label{contr}
Assume that $\gamma \in [0,1)$.
For a function $\e:\cX \to [0,1]$, for a function $q:\cZ\to \rr$
and for $x\in\cX$, we define the probability measure $\pi^\e_q(x,\cdot)$ on $\cA_x$ by
$$
\pi^\e_q(x,a)=\frac{\e(x)}{|\cA_x|}+ \indiq_{\{a \in \argmax q(x,\cdot)\}} 
\frac{1-\e(x)}{|\argmax q(x,\cdot)|}.
$$
We also introduce the function $\cH(\e,q):\cZ\to \rr$ defined by $\cH(\e,q)(x,a)=Q_{\pi^\e_q}(x,a)$,
recall \eqref{qpi}.
It holds that $\cH(\e,q)(x,a)\leq Q^*(x,a)$ for all $(x,a)\in\cZ$ and
$$
||\cH(\e,q)-Q^*||_\infty \leq \frac{\gamma}{1-\gamma}\Big( ||Q^*-q||_\infty + ||(q-Q^*)_+||_\infty 
+ 2 ||Q^*||_\infty ||\e||_\infty\Big),
$$
where $(q-Q^*)_+(x,a)=\max\{q(x,a)-Q^*(x,a),0\}$ and where $||\cdot||_\infty$ 
stand for $\max$ norms.
\end{lem}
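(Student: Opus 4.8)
The plan is to abbreviate $\pi=\pi^\e_q$ and $\cH(\e,q)=Q_\pi$, and to control \emph{simultaneously} the two gaps $||Q_\pi-Q^*||_\infty$ and $||V_\pi-V^*||_\infty$; these are coupled by the Bellman identities of Theorem~\ref{known}, and closing the coupling is exactly what produces the factor $\gamma/(1-\gamma)$. The first assertion $\cH(\e,q)\le Q^*$ is immediate: by Theorem~\ref{known}(ii) one has $Q_\pi\le Q^*$ for \emph{every} SM policy $\pi$.

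Next I would relate the two gaps. Subtracting $Q^*(x,a)=r(x,a)+\gamma\sum_y P(x,a,y)V^*(y)$ (Theorem~\ref{known}(iii)) from $Q_\pi(x,a)=r(x,a)+\gamma\sum_y P(x,a,y)V_\pi(y)$ (Theorem~\ref{known}(ii)), the reward term cancels, and since $P(x,a,\cdot)$ is a probability measure one reads off
\[
||Q_\pi-Q^*||_\infty \le \gamma\,||V_\pi-V^*||_\infty. \qquad (\star)
\]
To bound the $V$-gap, I would use $V^*(x)=\max_a Q^*(x,a)$ together with $V_\pi(x)=\sum_a Q_\pi(x,a)\pi(x,a)$ (Theorem~\ref{known}(ii)) and $\sum_a\pi(x,a)=1$ to write
\[
V^*(x)-V_\pi(x)=\sum_a \pi(x,a)\big(V^*(x)-Q^*(x,a)\big)+\sum_a \pi(x,a)\big(Q^*(x,a)-Q_\pi(x,a)\big),
\]
where the second sum is at most $||Q^*-Q_\pi||_\infty$. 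Splitting $\pi(x,\cdot)$ into its uniform exploration part (total weight $\e(x)$) and its $q$-greedy part (weight $1-\e(x)$), the exploration part contributes at most $2||Q^*||_\infty\,||\e||_\infty$, since $|V^*(x)|$ and $|Q^*(x,a)|$ are bounded by $||Q^*||_\infty$.

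The main obstacle is the greedy part, i.e. bounding $V^*(x)-Q^*(x,a)$ for $a\in\argmax q(x,\cdot)$. Choosing $b^*\in\argmax Q^*(x,\cdot)$ so that $V^*(x)=Q^*(x,b^*)$, I would insert $q$ and decompose
\[
V^*(x)-Q^*(x,a)=\big[Q^*(x,b^*)-q(x,b^*)\big]+\big[q(x,b^*)-q(x,a)\big]+\big[q(x,a)-Q^*(x,a)\big].
\]
The middle bracket is $\le 0$ \emph{because $a$ maximizes $q(x,\cdot)$} — this is the structural heart of the argument. The first bracket is $\le ||Q^*-q||_\infty$ and the third is $\le ||(q-Q^*)_+||_\infty$; note the asymmetry that explains why the one-sided norm $||(q-Q^*)_+||_\infty$ appears. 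Hence each greedy term, and so the entire greedy part, is at most $||Q^*-q||_\infty+||(q-Q^*)_+||_\infty$.

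Collecting the three contributions and writing $D:=||Q^*-q||_\infty+||(q-Q^*)_+||_\infty+2||Q^*||_\infty\,||\e||_\infty$, I obtain $V^*(x)-V_\pi(x)\le D+||Q^*-Q_\pi||_\infty$; since $Q_\pi\le Q^*$ forces $V_\pi\le V^*$, this is in fact a bound on $||V^*-V_\pi||_\infty$. Substituting $(\star)$ gives $||V^*-V_\pi||_\infty\le D+\gamma||V^*-V_\pi||_\infty$, whence $||V^*-V_\pi||_\infty\le D/(1-\gamma)$, and feeding this back into $(\star)$ yields $||Q_\pi-Q^*||_\infty\le \gamma D/(1-\gamma)$, which is precisely the claim. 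The decisive structural point is that this self-referential use of $(\star)$ is what manufactures the extra $1/(1-\gamma)$, hence the constant $\gamma/(1-\gamma)$ that will later restrict the analysis to $\gamma<1/2$.
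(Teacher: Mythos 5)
Your proof is correct and takes essentially the same route as the paper's: the paper's opening identity $Q^*(x,a)-Q_{\pi}(x,a)=\gamma\sum_y P(x,a,y)\bigl[\max_b Q^*(y,b)-\sum_b\pi(y,b)Q_{\pi}(y,b)\bigr]$ is precisely your $(\star)$ composed with your $V$-gap decomposition, and the paper's four terms $\Delta_1,\dots,\Delta_4$ (insert $\max_b q$, use greediness to kill the middle term, the one-sided norm $||(q-Q^*)_+||_\infty$, and the exploration cost $2||Q^*||_\infty||\e||_\infty$) correspond exactly to your three brackets plus exploration part. The self-referential closure that manufactures $\gamma/(1-\gamma)$ is likewise identical, merely run directly on the $Q$-gap in the paper rather than through the explicit intermediate $V$-gap as you do.
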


\blue
As we will see in Section \ref{convgen}, $\hQ^{k}$ defined in Algorithm 1 satisfies a recursive equation 
of the shape
$$ \hQ^{k+1} = \hQ^{k} + \lambda_k(\cH(\epsilon_k, \hQ^k)-\hQ^k) + \text{small noise}.$$
To show that $\hQ^k$ converges to $Q^*$, it is natural to try to prove that $q\mapsto \cH(\epsilon, q)$ 
is an approximate contraction around $Q^*$. In this spirit, observe that we have $\cH(0,Q^*)=Q_{\pi^0_{Q^*}}=Q^*$, 
because
$V_{\pi^0_{Q^*}}=V^*$ by Theorem \ref{known}-(i), so that $Q_{\pi^0_{Q^*}}=Q^*$ by Theorem \ref{known}-(ii)-(iii).
\vip
For $q\mapsto\cH(\epsilon,q)$ to be an approximate contraction, we need $\gamma<1/2$ to have 
$\gamma/(1-\gamma)<1$.
\black

\begin{proof}
First, know from Theorem \ref{known}-(ii) that $\cH(\e,q)(x,a)=Q_{\pi^\e_q}(x,a)\leq Q^*(x,a)$
for all $(x,a) \in \cZ$. 
Next, by Theorem \ref{known}-(ii)-(iii), we have, for $(x,a)\in\cZ$, 
\begin{align*}
0\leq Q^*(x,a)-Q_{\pi^\e_q}(x,a)=&
\gamma \sum_{y \in \cX} P(x,a,y)\Big[\max_{b \in \cA_y} Q^*(y,b)-\sum_{b\in\cA_y} \pi^\e_q(y,b)Q_{\pi^\e_q}(y,b)
\Big]\\
=&\gamma \sum_{y\in \cX} P(x,a,y)\Big[\max_{b \in \cA_y} Q^*(y,b)- \sum_{b\in \cA_y} \pi^\e_q(y, b)Q^*(y,b)\Big]\\
&+ \gamma \sum_{y\in \cX} P(x,a,y)\sum_{b\in \cA_y} \pi^\e_q(y, b)\Big[Q^*(y,b)-Q_{\pi^\e_q}(y,b)\Big]\\
\leq & \gamma \sum_{y\in\cX}P(x,a,y)\Delta(y) + \gamma ||Q^* -  Q_{\pi^\e_q}||_\infty,
\end{align*}
where
$$
\Delta(y):=\max_{b \in \cA_y} Q^*(y,b)-\sum_{b \in \cA_y} \pi^\e_q(y,b)Q^*(y,b).
$$
We write $\Delta(y)=\Delta_1(y)+\Delta_2(y)+\Delta_3(y)+\Delta_4(y)$,
with
\begin{align*}
&\Delta_1(y)= \max_{b \in \cA_y} Q^*(y,b)- \max_{b \in \cA_y} q(y,b),\quad
&\Delta_2(y)=& \max_{b \in \cA_y} q(y,b)- \sum_{b\in\cA_x} q(y,b)\pi_q(y,b),\\
&\Delta_3(y)= \sum_{b\in\cA_y} [q(y,b)-Q^*(y,b)]\pi_q(y,b),\quad
&\Delta_4(y)=& \sum_{b\in\cA_y} Q^*(y,b) [\pi_q(y,b)-\pi_q^\e(y,b)],
\end{align*}
where we have set 
$$
\pi_q(x,a)=\indiq_{\{a \in \argmax q(x,\cdot)\}} \frac{1}{|\argmax q(x,\cdot)|}\qquad \hbox{for all $(x,a)\in\cZ$.}
$$
We clearly have $\Delta_1(y) \leq ||Q^*-q||_\infty$,
as well as $\Delta_3(y) \leq ||(q-Q^*)_+||_\infty$.
By definition of $\pi_q$, it holds that $\Delta_2(y)=0$. Finally,
$$
\Delta_4(y) \leq ||Q^*||_\infty \blue \sup_{y \in \cX}\sum_{b \in \cA_y}|\pi_q(y,b)-\pi_q^\e(y,b)|  \black 
\leq 2||Q^*||_\infty ||\e||_\infty.
$$
Thus
$$
\Delta(y) \leq ||Q^*-q||_\infty+||(q-Q^*)_+||_\infty+2||Q^*||_\infty ||\e||_\infty, 
$$
whence
$$ 
\sum_{y\in\cX}P(x,a,y)\Delta(y)\leq ||Q^*-q||_\infty+||(q-Q^*)_+||_\infty+2||Q^*||_\infty ||\e||_\infty.
$$
All in all, we have proved that
$$
0\leq Q^*(x,a)-Q_{\pi^\e_q}(x,a) \leq \gamma\Big(||Q^*-q||_\infty+||(q-Q^*)_+||_\infty+2||Q^*||_\infty ||\e||_\infty\Big)
+\gamma ||Q^* -  Q_{\pi^\e_q}||_\infty.
$$
We end with
$$
||Q^* -  Q_{\pi^\e_q}||_\infty \leq \gamma\Big(||Q^*-q||_\infty+||(q-Q^*)_+||_\infty+2||Q^*||_\infty ||\e||_\infty\Big)
+\gamma ||Q^* -  Q_{\pi^\e_q}||_\infty,
$$
from which the result readily follows, recalling that $\cH(\e,q)=Q_{\pi^\e_q}$.
\end{proof}

\section{A general convergence result}\label{agp}

We consider a finite set $\cY$ and denote by $\bF(\cY,[0,1])$
the set of all functions from $\cY$ to $[0,1]$ and by $\bF(\cY,\rr)$
the set of all functions from $\cY$ to $\rr$.

\begin{assumption}\label{cgt}
There exists $f_* \in \bF(\cY,\rr)$, $\rho \in (0,1)$ and $\beta>0$ such that
the function 
$$
\cM : \bF(\cY,[0,1])\times\bF(\cY,\rr)\to \bF(\cY,\rr)
$$ 
satisfies, for all $f \in \bF(\cY,\rr)$,
all $\eta \in \bF(\cY,[0,1])$,
\begin{gather}
\cM(\eta,f)(y)\leq f_*(y) \qquad \hbox{ for all $y\in\cY$}, \label{ccstarstar}\\
||f_*-\cM(\eta,f) ||_\infty \leq \rho ||f_*-f||_\infty + \beta ||(f - f_*)_+||_\infty + \beta ||\eta||_\infty. 
\label{ccstar}
\end{gather}
\end{assumption}

Our main results will be deduced from the following general abstract result.
\blue
A result in the same spirit is stated by Singh-Jaakkola-Littman-Szepesv\'ari \cite[Lemma 1]{sjls}.
However, they deal with less complex filtrations and assume rather a condition like
$||f_*-\cM(\eta,f) ||_\infty \leq \rho ||f_*-f||_\infty  + \beta ||\eta||_\infty $.
\black

\begin{prop}\label{gprop}
Assume that $\cM$ satisfies Assumption \ref{cgt} and consider 
\vip
\noindent $\bullet$ a family $(\cI^k_y)_{k\geq 0,y\in\cY}$  of $\sigma$-fields such that $\cI^k_y\subset \cI^{k+1}_{y'}$ 
for all $k\geq 0$, all $y,y'\in\cY$,

\noindent $\bullet$ some random $\hat f_0 \in \bF(\cY,\rr)$ such that $\hat f_0(y)$ is $\cI^0_y$-measurable 
for all $y\in\cY$,

\noindent $\bullet$ for each $k\geq 0$, some random
$\eta_k \in \bF(\cY,[0,1])$ which is $\cI^{k}_{y}$-measurable for all $y\in\cY$.

\noindent $\bullet$ for each $k\geq 1$, some random $\xi_k \in \bF(\cY,\rr)$ such that $\xi_k(y)$ is 
$\cI^k_y$-measurable for all $y\in\cY$,

\noindent $\bullet$ for each $k\geq 0$, some random $\lambda_k \in \bF(\cY,[0,1])$ such that $\lambda_k(y)$ is 
$\cI^{k}_y$-measurable for all $y\in\cY$.

\vip
Assume moreover that a.s., for all $y\in \cY$, we have 
$$
\lim_k \eta_k(y)=0,\qquad\sum_{k\geq 0} \lambda_k(y)=\infty,\qquad \sum_{k\geq 0} (\lambda_k(y))^2<\infty
$$ 
and that there is a constant $C>0$ such that for all $k\geq 0$,  all $y \in \cY$, 
$$
\E[\xi_{k+1}(y)|\cI^k_y]=0 \qquad\hbox{and}\qquad\E[(\xi_{k+1}(y))^2|\cI^k_y]\leq C.
$$

Define recursively, for $k\geq 0$,
$$
\hat f_{k+1}(y)=\hat f_k(y)
+\lambda_k(y)\Big(\cM(\eta_k,\hat f_k)(y)-\hat f_k(y)+\xi_{k+1}(y)\Big)\qquad \hbox{for all $y\in\cY$}.
$$
Then a.s., for all $y \in \cY$, $\lim_{k\to \infty} \hat f_k(y)=f_*(y)$.
\end{prop}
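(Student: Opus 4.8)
The plan is to track the error $\Delta_k:=\hat f_k-f_*$ and to exploit the asymmetry built into Assumption \ref{cgt}. Writing $F_k:=\cM(\eta_k,\hat f_k)-f_*$, the recursion becomes, coordinatewise, $\Delta_{k+1}(y)=(1-\lambda_k(y))\Delta_k(y)+\lambda_k(y)(F_k(y)+\xi_{k+1}(y))$, and \eqref{ccstarstar}--\eqref{ccstar} translate into $F_k\le 0$ together with $\|F_k\|_\infty\le \rho\|\Delta_k\|_\infty+\beta\|(\Delta_k)_+\|_\infty+\beta\|\eta_k\|_\infty$. The engine throughout will be the scalar Robbins--Monro lemma of Appendix \ref{qpkr}: for fixed $y$ the filtration $(\cI^k_y)_k$ is increasing, $\lambda_k(y)$ is $\cI^k_y$-measurable, $\xi_{k+1}(y)$ is a martingale increment with conditional variance bounded by $C$, and $\sum_k\lambda_k(y)=\infty$, $\sum_k\lambda_k(y)^2<\infty$; hence any process $u_{k+1}(y)=(1-\lambda_k(y))u_k(y)+\lambda_k(y)(b+\xi_{k+1}(y))$ with constant drift $b$ converges a.s. to $b$. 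I will combine this with the elementary monotone comparison: if two such recursions share the same $\lambda_k(y)$ and $\xi_{k+1}(y)$ and one has a pointwise larger drift, then, since $1-\lambda_k(y)\ge 0$, its solution stays pointwise larger.

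First I would show that the overshoot vanishes, i.e. $\|(\Delta_k)_+\|_\infty\to 0$. Since $F_k\le 0$, the drift of $\Delta_k$ is dominated by the zero drift, so by induction $\Delta_k(y)\le U_k(y)$, where $U_{k+1}(y)=(1-\lambda_k(y))U_k(y)+\lambda_k(y)\xi_{k+1}(y)$ and $U_0=(\Delta_0)_+$. By the Robbins--Monro lemma with $b=0$, $U_k(y)\to 0$ for each of the finitely many $y\in\cY$, whence $\limsup_k\Delta_k(y)\le 0$ and therefore $\|(\Delta_k)_+\|_\infty\to 0$.

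The conceptual heart is the self-improving contraction that forces the remaining (negative) part to zero, and it is here that $\rho<1$ is used. Granting boundedness, set $\bar\Delta:=\limsup_k\|\Delta_k\|_\infty$. Fix $\epsilon>0$; by the previous step and $\eta_k\to 0$ there is $k_0$ with $\|(\Delta_k)_+\|_\infty\le\epsilon$, $\|\eta_k\|_\infty\le\epsilon$ and $\|\Delta_k\|_\infty\le\bar\Delta+\epsilon$ for all $k\ge k_0$, so that $F_k(y)\ge -c$ with $c:=\rho(\bar\Delta+\epsilon)+2\beta\epsilon$. Comparing from below with the constant-drift process $L_{k+1}(y)=(1-\lambda_k(y))L_k(y)+\lambda_k(y)(-c+\xi_{k+1}(y))$ started at $L_{k_0}=\Delta_{k_0}$ gives $\Delta_k(y)\ge L_k(y)\to -c$, hence $\liminf_k\Delta_k(y)\ge -c$ for every $y$. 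Combined with $\|(\Delta_k)_+\|_\infty\to 0$ this yields $\bar\Delta\le c=\rho(\bar\Delta+\epsilon)+2\beta\epsilon$; letting $\epsilon\downarrow 0$ gives $\bar\Delta\le\rho\bar\Delta$, so $\bar\Delta=0$ since $\rho<1$, i.e. $\hat f_k(y)\to f_*(y)$ for all $y$.

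The step I expect to be the main obstacle is the boundedness $\bar\Delta<\infty$, which the contraction argument presupposes. The difficulty is that the lower bound on $F_k$ involves $\rho\|\Delta_k\|_\infty$, so the comparison drift refers to the very quantity one wants to control, and the martingale structure underpinning the Robbins--Monro lemma is lost once one passes to $|\Delta_k|$. I would handle this by a scaling argument in the spirit of Tsitsiklis \cite{t}: the martingale $\sum_j \lambda_j(y)\xi_{j+1}(y)$ converges a.s. (it is $L^2$-bounded because $\sum_k\lambda_k(y)^2<\infty$ and the conditional variances are bounded by $C$), so after rescaling $\Delta_k$ by its running maximum the relative noise becomes negligible while the drift still contracts by the factor $\rho<1$, which is incompatible with the running maximum diverging. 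A secondary, purely bookkeeping difficulty is that $\lambda_k$ and $\eta_k$ are random and coordinate-dependent and that the $\sigma$-fields $\cI^k_y$ vary with $y$; this is precisely why the comparisons and the Robbins--Monro lemma must be run separately in each of the finitely many coordinates $y\in\cY$ before taking maxima over $y$.
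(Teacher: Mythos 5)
Your overall architecture matches the paper's: your first step (comparison with the pure-noise recursion $U_k$, giving $\|(\hat f_k-f_*)_+\|_\infty\to 0$) is exactly the paper's Steps 1--2, and your self-improving bound $\bar\Delta\le\rho\bar\Delta$ is the paper's Steps 4--6. One repair is needed in your contraction step: you invoke the Robbins--Monro lemma for $L_k$, but $L_k$ starts at a random time $k_0$ that is \emph{not} a stopping time (it is chosen using $\bar\Delta$ and the limits of $\|(\Delta_k)_+\|_\infty$ and $\|\eta_k\|_\infty$, hence depends on the whole future), and its drift level $-c$ is likewise not adapted, so Lemma \ref{rm} does not apply as stated. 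This is fixable by linearity: writing $W^{(m)}$ for the pure-noise recursion started from $0$ at a \emph{deterministic} time $m$, one has
$$
L_k(y)+c=\bigl(\Delta_{k_0}(y)+c\bigr)\prod_{\ell=k_0}^{k-1}\bigl(1-\lambda_\ell(y)\bigr)+W^{(k_0)}_k(y),
$$
Lemma \ref{rm} gives $W^{(m)}_k(y)\to 0$ a.s.\ simultaneously for all $m\in\nn$ and $y\in\cY$, and the product vanishes pathwise since $\sum_\ell\lambda_\ell(y)=\infty$. (The paper sidesteps this issue entirely by comparing with recursions that contain no noise term at all.)

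The genuine gap is the a.s.\ boundedness $\sup_k\|\hat f_k-f_*\|_\infty<\infty$, which you rightly identify as the crux but do not prove. Your sketch is flawed as written: the martingale $\sum_j\lambda_j(y)\xi_{j+1}(y)$ is \emph{not} $L^2$-bounded under the hypotheses (only $\sum_k\lambda_k(y)^2<\infty$ a.s.\ is assumed, not in expectation, so one needs a localization argument), and the Tsitsiklis-type rescaling --- with state-dependent random steps and a contraction that only takes effect after the random time at which $\beta\|(\Delta_k)_+\|_\infty+\beta\|\eta_k\|_\infty$ has become small --- is a substantial argument you have not carried out. The paper's Step 3 shows the obstacle dissolves with no extra martingale theory: since $\hphi_k:=\hat f_k-f_*$ and $W_k$ (your $U_k$) are driven by \emph{the same} noise, the nonnegative gap $\Delta_k:=W_k-\hphi_k$ obeys the noise-free recursion $\Delta_{k+1}(y)=(1-\lambda_k(y))\Delta_k(y)+\lambda_k(y)\,|f_*(y)-\cM(\eta_k,\hat f_k)(y)|$; bounding the last factor via \eqref{ccstar} and using $\|\hphi_k\|_\infty\le\|\Delta_k\|_\infty+\|W_k\|_\infty$ yields $\Delta_{k+1}(y)\le(1-(1-\rho)\lambda_k(y))\|\Delta_k\|_\infty+\lambda_k(y)K$, where $K:=\sup_k\bigl[\rho\|W_k\|_\infty+\beta\|(\hphi_k)_+\|_\infty+\beta\|\eta_k\|_\infty\bigr]$ is a.s.\ finite precisely thanks to your own first step; the case analysis $\|\Delta_{k+1}\|_\infty\le\max\{\|\Delta_k\|_\infty,K/(1-\rho)\}$ then gives boundedness in a few lines. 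You should replace your scaling sketch by this argument.
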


\blue
To prove the convergence of Algorithm 1, we will apply Proposition \ref{gprop} with
$\cM=\cH$ (and $\eta=\epsilon$ and $f=q$). As seen in Lemma \ref{contr}, $\cH$ satisfies \eqref{ccstar}
with $\rho=\gamma/(1-\gamma)$ and $\beta=\gamma/(1-\gamma)$ (of course, the value of $\beta$ in 
front of $\|\eta\|_\infty$ is not important). If assuming that $\rho+\beta<1$ (that is $\gamma<1/3$ 
in the context of Algorithm 1), we could more or less apply
\cite[Lemma 1]{sjls}, up to some filtration issues. However, taking advantage of the fact $\cH$ also
satisfies \eqref{ccstarstar}, we manage to conclude as soon as $\rho<1$ ($\gamma<1/2$ in the context of 
Algorithm 1), even if $\beta$ is very large.
Finally, we really need to introduce some state-dependent filtrations, to apply Lemma \ref{rm} correctly to 
Algorithm 1.
\black

\vip

The proof of this result relies on the following simple version of the Robbins-Monro theorem \cite{rm}.
See also its (short) proof in Appendix \ref{qpkr}.

\begin{lem}\label{rm}
Consider a filtration $(\cG_k)_{k\geq 0}$, a $\cG_0$-measurable real random variable $Z_0$, as well as some
$(\cG_k)_{k\geq 0}$-adapted sequences $(\theta_k)_{k\geq 0}$ and $(\zeta_k)_{k\geq 1}$ of real random variables,
with $\theta_k$ valued in $[0,1]$. Assume that a.s., $\sum_{k\geq 0} \theta_k =\infty$ and $\sum_{k\geq 0} \theta_k^2 
<\infty$ and that there is a constant $C>0$ such that for all $k\geq 0$,
$\E[\zeta_{k+1}|\cG_k]=0$ and $\E[(\zeta_{k+1})^2|\cG_k]\leq C$. Define recursively, for $k\geq 0$,
$$
Z_{k+1}=(1-\theta_k)Z_k+\theta_k\zeta_{k+1}.
$$
Almost surely, $\lim_{k}Z_k=0$.
\end{lem}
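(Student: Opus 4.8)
The plan is to study the squared process $Z_k^2$, which obeys an almost-supermartingale inequality that simultaneously forces $Z_k^2$ to converge and $\sum_k\theta_k Z_k^2$ to be finite.

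First I would derive the $L^2$ recursion. Since $Z_k$ and $\theta_k$ are $\cG_k$-measurable while $\E[\zeta_{k+1}\mid\cG_k]=0$ and $\E[\zeta_{k+1}^2\mid\cG_k]\le C$, the cross term vanishes and
\[
\E[Z_{k+1}^2\mid\cG_k]=(1-\theta_k)^2 Z_k^2+\theta_k^2\,\E[\zeta_{k+1}^2\mid\cG_k]\le (1-\theta_k)^2 Z_k^2+C\theta_k^2.
\]
Using the elementary bound $(1-\theta_k)^2\le 1-\theta_k$, valid since $\theta_k\in[0,1]$, this becomes the central estimate
\[
\E[Z_{k+1}^2\mid\cG_k]\le Z_k^2-\theta_k Z_k^2+C\theta_k^2.
\]

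Next I would compensate to obtain a true supermartingale. Set $A_k=C\sum_{j=0}^{k-1}\theta_j^2$, which is $\cG_k$-measurable, nondecreasing, and satisfies $A_{k+1}=A_k+C\theta_k^2$. Then $Y_k:=Z_k^2-A_k$ satisfies $\E[Y_{k+1}\mid\cG_k]\le Y_k-\theta_k Z_k^2\le Y_k$, so $(Y_k)$ is a supermartingale. It is bounded below only by $-A_\infty$, which is random and a priori non-integrable, and $Z_0$ itself is not assumed integrable, so the supermartingale convergence theorem cannot be applied directly. I would remedy this by localization: truncate on $\{|Z_0|\le m\}$ and stop at $\sigma_m=\inf\{k\ge 0:A_{k+1}>m\}$, which is a $(\cG_k)$-stopping time because $A_{k+1}$ is $\cG_k$-measurable. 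On $\{k\le\sigma_m\}$ one has $A_k\le m$, so the truncated and stopped version of $Y_k$ is bounded below by $-m$ and lies in $L^1$; it therefore converges a.s., and summing the central estimate up to $\sigma_m$ gives in addition $\sum_k\theta_k Z_k^2\,\indiq_{\{k<\sigma_m\}}<\infty$ a.s. Since $\sum_j\theta_j^2<\infty$ and $|Z_0|<\infty$ a.s., we have $\sigma_m=\infty$ and $|Z_0|\le m$ for $m$ large enough, so letting $m\to\infty$ shows that $Y_k$ converges a.s. and $\sum_k\theta_k Z_k^2<\infty$ a.s. As $A_k\to A_\infty<\infty$, the identity $Z_k^2=Y_k+A_k$ then yields $Z_k^2\to L$ a.s. for some finite $L\ge 0$.

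Finally I would exclude $L>0$. On $\{L>0\}$ one has $Z_k^2\ge L/2$ for all large $k$, hence $\sum_k\theta_k Z_k^2\ge (L/2)\sum_{k\ge K}\theta_k=+\infty$, which contradicts the summability just established because $\sum_k\theta_k=\infty$ a.s. Thus $L=0$, i.e. $\lim_k Z_k=0$ a.s. The $L^2$ computation is routine; I expect the main obstacle to be the second step, namely carrying out the localization rigorously — verifying the stopping-time and integrability conditions needed to invoke supermartingale convergence when neither $Z_0$ nor the accumulated error $A_\infty$ is integrable, and then checking that the localized conclusions survive the limit $m\to\infty$.
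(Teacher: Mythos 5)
Your proof is correct, and it follows the same overall strategy as the paper's: a Robbins--Siegmund-type analysis of $Z_k^2$, deriving an almost-supermartingale inequality, deducing simultaneously that $Z_k^2$ converges a.s. and that $\sum_k \theta_k Z_k^2<\infty$ a.s., and then using $\sum_k\theta_k=\infty$ to force the limit to be zero. The implementation differs in two ways. First, you compensate additively, using $(1-\theta_k)^2\le 1-\theta_k$ and $Y_k=Z_k^2-C\sum_{j<k}\theta_j^2$, whereas the paper compensates multiplicatively, setting $\gamma_k=[\prod_{\ell<k}(1+\theta_\ell^2)]^{-1}$ and working with the two supermartingales $M_k=\gamma_k Z_k^2-C\sum_{\ell<k}\gamma_{\ell+1}\theta_\ell^2$ and $N_k=M_k+2\sum_{\ell<k}\gamma_{\ell+1}\theta_\ell Z_\ell^2$; in the paper both conclusions (convergence of $\gamma_kZ_k^2$ and summability of the drift) then come from the same supermartingale convergence theorem, while you obtain the summability by telescoping expectations of the stopped process. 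Second, and more substantively, you make the integrability issues explicit: since neither $Z_0$ nor the accumulated error is assumed integrable, you truncate on $\{|Z_0|\le m\}$ and stop at the predictable time $\sigma_m$ where the adapted compensator first exceeds $m$. The paper's supermartingales are only bounded below by the random variable $-CL$ with $L=\sum_k\gamma_{k+1}\theta_k^2$, which is a.s. finite but not necessarily integrable, so its appeal to supermartingale convergence ("one easily checks") implicitly requires exactly the localization you spell out. Your version is therefore slightly more elementary in its compensation scheme and more careful on precisely the point the paper glosses over.
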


\begin{proof}[Proof of Proposition \ref{gprop}]
First, one easily checks by induction that for all $k\geq 0$, all $y\in\cY$, \blue $\hf_k(y)$ \black 
is $\cI^k_y$-measurable.
Our goal is to show that a.s., for all $y\in\cY$,
$\lim_k \hphi_k(y)=0$, where $\hphi_k(y)=\hf_k(y)-f_*(y)$. For all $k\geq 0$, all $y \in \cY$, we have
\begin{align}
\hphi_{k+1}(y)=&\hphi_{k}(y)+\lambda_k(y)[\cM(\eta_k,\hat f_k)(y)-f_*(y)-\hphi_k(y)+\xi_{k+1}(y)] \notag\\
=&(1-\lambda_k(y))\hphi_{k}(y)+\lambda_k(y)[\cM(\eta_k,\hat f_k)(y)-f_*(y)]+\lambda_k(y)\xi_{k+1}(y).\label{tbu}
\end{align}
We now divide the proof into several steps.
\vip

{\it Step 1.} We define $(W_k(y))_{k\geq 0,y\in\cY}$ by $W_0(y)=\hphi_{0}(y)$ and, for $k\geq 0$,
$$
W_{k+1}(y)= (1-\lambda_k(y))W_{k}(y)+\lambda_k(y)\xi_{k+1}(y). 
$$
For each fixed $y \in\cY$, we can apply Lemma \ref{rm} with $\cG_k=\cI^k_y$, $Z_0=W_0(y)$, 
$\theta_k=\lambda_k(y)$, $\zeta_k=\xi_k(y)$, so that $Z_k=W_k(y)$.
We get that a.s., for each $y\in\cY$, a.s., $\lim_k  W_{k}(y)=0$.

\vip

{\it Step 2.} We now show by induction that for all $k\geq 0$, all $y\in \cY$, $\hphi_k(y)\leq W_k(y)$. 
This will imply, thanks to Step 1, that \blue $\lim_k ||(\hphi_k)_+||_\infty=0$ a.s. \black

\vip
First, $\hphi_0(y)= W_0(y)$ for all $y\in \cY$. If next $\hphi_k(y)\leq W_k(y)$ for some $k\geq 0$, then
using \eqref{tbu} and Assumption \ref{cgt},
$$
\hphi_{k+1}(y)\leq (1-\lambda_k(y))\hphi_{k}(y)+\lambda_k(y)\xi_{k+1}(y) \leq (1-\lambda_k(y))W_{k}(y)
+\lambda_k(y)\xi_{k+1}(y)=W_{k+1}(y).
$$

{\it Step 3.} We prove here that a.s., $\sup_{k\geq 0}||\hphi_k||_\infty<\infty$. 
\vip

To this end, 
it suffices to prove that $\sup_{k\geq 0}||\Delta_k||_\infty<\infty$, where $\Delta_k(y)=W_k(y)-\hphi_k(y)$,
because $\sup_{k\geq 0}||W_k||_\infty<\infty$ a.s., since $\lim_k  ||W_{k}||_\infty=0$ a.s. by Step 1.
We recall that $\Delta_k(y)\geq0$ by Step 2 and write
\begin{align}
\Delta_{k+1}(y)= & (1-\lambda_k(y))\Delta_{k}(y)+\lambda_k(y)|f_*(y)-\cM(\eta_k,\hat f_k)(y)| \notag\\
\leq& (1-\lambda_k(y)) \Delta_{k}(y) +\lambda_k(y)[\rho||\hphi_k||_\infty+\beta ||(\hphi_k)_+||_\infty 
+\beta ||\eta_k||_\infty]\label{ttroc}
\end{align}
by Assumption \ref{cgt}. Since $||\hphi_k||_\infty\leq ||\Delta_k||_\infty+||W_k||_\infty$, this gives
$$
\Delta_{k+1}(y)\leq (1-\lambda_k(y))||\Delta_k||_\infty+\lambda_k(y)\rho||\Delta_k||_\infty + \lambda_k(y)H_k
$$
where $H_k=\rho||W_k||_\infty+\beta ||(\hphi_k)_+||_\infty +\beta ||\eta_k||_\infty$.
Setting $K=\sup_{k\geq 0}H_k$, which is a.s. finite by Steps 1 and 2, we end with
$$
\Delta_{k+1}(y)\leq (1-(1-\rho)\lambda_k(y))||\Delta_k||_\infty + \lambda_k(y)K.
$$

If $||\Delta_k||_\infty\geq K/(1-\rho)$, we conclude that $|\Delta_{k+1}(y)|\leq ||\Delta_k||_\infty$.
\vip
If  $||\Delta_k||_\infty< K/(1-\rho)$, we conclude that $|\Delta_{k+1}(y)|\leq K/(1-\rho)$.
\vip
We thus always have $||\Delta_{k+1}||_\infty\leq \max\{||\Delta_k||_\infty, K/(1-\rho)\}$,
from which we easily conclude that for all $k\geq 0$, 
$||\Delta_{k}||_\infty\leq \max\{||\Delta_0||_\infty, K/(1-\rho)\}$.
\vip

{\it Step 4.} We fix $k_0\geq 1$ and set 
\begin{equation}\label{dkz}
D_{k_0}=||W_{k_0}||_\infty+\sup_{k\geq k_0} \Big[||\hphi_k||_\infty
+\frac\beta\rho\Big(||(\hphi_k)_+||_\infty+||\eta_k||_\infty\Big)\Big].
\end{equation}
For each $y \in \cY$ fixed, we define $(Y_k^{k_0}(y))_{k\geq k_0}$ by $Y_{k_0}^{k_0}(y)=D_{k_0}$ 
and by induction, for $k\geq k_0$, 
$$
Y^{k_0}_{k+1}(y)=(1-\lambda_k(y))Y^{k_0}_{k}(y)+\rho \lambda_k(y)D_{k_0}.
$$ 
We show here that $\lim_k Y_k^{k_0}(y)=\rho D_{k_0}$ a.s. 

\vip
It suffices to note that for all $k\geq k_0$, 
$(Y^{k_0}_{k+1}(y)-\rho D_{k_0})=(1-\lambda_k(y))(Y^{k_0}_{k}(y)-\rho D_{k_0})$, whence
$Y^{k_0}_{k+1}(y)-\rho D_{k_0}=(1-\rho)D_{k_0}\prod_{\ell=k_0}^{k} (1-\lambda_\ell(y))$. This last quantity a.s.
tends to $0$ as $k\to \infty$ because by assumption, $\sum_{\ell \geq k_0} \lambda_\ell(y)=\infty$ a.s.

\vip
{\it Step 5.} Here we check that $\Delta_k(y)\leq Y^{k_0}_k(y)$
for all $k\geq k_0\geq0$ and all $y \in \cY$. 

\vip
We obviously have $\Delta_{k_0}(y)\leq ||W_{k_0}||_\infty
+||\hphi_{k_0}||_\infty\leq D_{k_0}=Y^{k_0}_{k_0}(y)$ and, assuming by induction that $\Delta_k(y)\leq Y^{k_0}_k(y)$
for some $k\geq k_0$, we write, recalling \eqref{ttroc},
\begin{align*}
\Delta_{k+1}(y)\leq & (1-\lambda_k(y))Y^{k_0}_{k}(y)
+\lambda_k(y)[\rho ||\hphi_k||_\infty+\beta||(\hphi_k)_+||_\infty+\beta||\eta_k||_\infty]\\
=&(1-\lambda_k(y))Y^{k_0}_{k}(y)
+\rho \lambda_k(y)\Big[||\hphi_k||_\infty+\frac\beta\rho||(\hphi_k)_+||_\infty+\blue \frac \beta\rho \black
||\eta_k||_\infty\Big]\\
\leq & (1-\lambda_k(y))Y^{k_0}_{k}(y)+\rho \lambda_k(y)D_{k_0} \\
=& Y^{k_0}_{k+1}(y).
\end{align*}

{\it Step 6.} Since $\lim_k ||W_k||_\infty =0$ a.s. by Step 1, we conclude that a.s., 
$\limsup_k ||\hphi_k||_\infty =\limsup_k ||\Delta_k||_\infty$. Hence by Steps 5 and 4
(and since $\Delta_k(y)\geq 0$, see Step 3), for any $k_0\geq 0$, a.s.,
$$
\limsup_k ||\hphi_k||_\infty \leq \limsup_k ||Y^{k_0}_k||_\infty=\rho D_{k_0}.
$$
As a consequence, we a.s. have
$$
\limsup_k ||\hphi_k||_\infty \leq \rho\limsup_{k_0} D_{k_0} .
$$
Recalling the definition \eqref{dkz} of $D_{k_0}$ and that 
$\lim_k ||W_k||_\infty=0$ by Step 1, that $\lim_k ||(\hphi_k)_+||_\infty=0$ by Step 2 and 
that $\lim_k ||\eta_k||_\infty=0$ by assumption, we end with
$$
\limsup_k ||\hphi_k||_\infty \leq \rho \limsup_k ||\hphi_k||_\infty.
$$
Since finally $\limsup_k ||\hphi_k||_\infty <\infty$ by Step 3 and since $\rho \in (0,1)$, 
we find that $\limsup_k ||\hphi_k||_\infty =0$ a.s., which was our goal.
\end{proof}

\section{Convergence of \blue Algorithm 1 \black}\label{convgen}

\blue Here apply the general result of the previous section to write down the \black

\begin{proof}[Proof of Theorem \ref{mrg}] We divide the proof in two steps.
\vip

{\it Step 1.} We claim that it suffices to show that $\lim_k\hQ^k(x,a)=Q^*(x,a)$ a.s. for all $(x,a)\in \cZ$. 

\vip
Indeed, assume this is the case. Observe that by construction, see \eqref{gg1} and Lemma \ref{contr},
$$
\hpi^k=\pi^{\e_k}_{\hQ^k},  \qquad \hbox{so that} \qquad  Q_{\hpi^k}=\cH(\e_k,\hQ^k).
$$
By Lemma \ref{contr}, 
the facts that $\lim_k \e_k(x)=0$ by assumption and that 
$\lim_k\hQ^k(x,a)=Q^*(x,a)$ for all $(x,a)\in\cZ$ imply that
$\lim_k Q_{\hpi^k}(x,a)=Q^*(x,a)$ for all  $(x,a)\in\cZ$. But Theorem \ref{known}-(ii) tells us that
for all $x\in \cX$,
$$
V_{\hpi^k}(x)=\sum_{a\in\cA_x} Q_{\hpi^k}(x,a)\hpi^k(x,a)=\frac{\e_k(x)}{|\cA_x|}\sum_{a\in\cA_x} Q_{\hpi^k}(x,a)
+ (1-\e_k(x)) \max_{a \in\cA_x} Q_{\hpi^k}(x,a)
$$
by definition of $\hpi^k$, see \eqref{gg1} again. 
Using that $\lim_k \e_k(x)=0$ and $\lim_k Q_{\hpi^k}(x,a)=Q^*(x,a)$, we conclude that
$$
\lim_kV_{\hpi^k}(x)=  \max_{a \in\cA_x} Q^*(x,a),
$$
which equals $V^*(x)$ by Theorem \ref{known}-(iii).
\vip

{\it Step 2.} To prove that $\lim_k\hQ^k(x,a)=Q^*(x,a)$ a.s. for all $(x,a)\in \cZ$, we aim to 
apply Proposition \ref{gprop} with $\cY=\cZ$, with $\hf^k=\hQ^k$ for all $k\geq 0$, with the map, 
$\cM(\e,q)= \cH(\e,q)=Q_{\pi_q^\e}$ for all $\e \in \bF(\cZ,[0,1])$ and all $q \in \bF(\cZ,\rr)$, 
with $\eta_k(z)=\e_k(x)$ and $\lambda_k(z)=\indiq_{\{\tau^{k+1}_{x,a}<\infty\}}\alpha_k(x,a)$ 
for all $k\geq 0$, all $z=(x,a)\in \cZ$, with
$$
\xi_{k}(z)=\indiq_{\{\tau^{k}_{x,a}<\infty\}} \Big[G^{k}_{x,a}-\cH(\e_{k-1},\hQ^{k-1})(x,a)\Big] 
\qquad \hbox{for all $k\geq 1$, all $z=(x,a) \in \cZ$,}
$$
and with the family of filtrations $(\cI^k_z)_{k\geq 0,z \in \cZ}$ defined by (for 
$z=(x,a)$)
$$
\cI^k_{z}= \cF^k \lor \cG^{k+1}_{\tau^{k+1}_{x,a}},
$$
where $\cG^k_\ell = \sigma(X_0^k,A_0^k,R_1^k,X_1^k,A_1^k,\dots,R_\ell^k,X_\ell^k,A_\ell^k)$
and $\cF^k=\lor_{i=1}^k\cG^i_\infty$ (and $\cF^0=\{\emptyset, \Omega\}$).

\vip

First, Lemma \ref{contr} tells us that $\cH$ satisfies Assumption \ref{cgt} with $f_*=Q^*$, with 
$\rho=\gamma/(1-\gamma) \in (0,1)$ (recall that $\gamma \in (0,1/2)$) 
and $\beta=[\gamma/(1-\gamma)]\max\{1,2||Q^*||_\infty\}$.

\vip

We obviously have $\cI^k_{z}\subset \cI^{k+1}_{z'}$ for all $k\geq 0$, all $z,z' \in\cZ$. Moreover,
$\hQ^0$ is deterministic by assumption and thus $\hQ^0(z)$ is 
$\cI^0_z$-measurable for all $z \in \cZ$. For each $k\geq 0$, each $z=(x,a)\in\cZ$, $\eta_k(z)=\e_k(x)$
is $\cF^k$-measurable by assumption and thus \blue $\cI^{k}_{z}$-measurable. \black
For each $k\geq 1$, each $z=(x,a) \in \cZ$,
$\xi_k(z)=\indiq_{\{\tau^{k}_{x,a}<\infty\}} [G^{k}_{x,a}-\cH(\e_{k-1},\hQ^{k-1})(x,a)]$ is $\cF^k$-measurable and thus
$\cI^k_z$-measurable. Finally, for each $k\geq 0$, each $z=(x,a) \in \cZ$,
$\lambda_k(z)=\indiq_{\{\tau^{k+1}_{x,a}<\infty\}}\alpha_k(x,a)$ is $\cI^k_z$-measurable,
since $\alpha_k(x,a)$ is \blue $\cI^k_z$-measurable \black by assumption.

\vip

We next observe that for all $k\geq 0$, all $z=(x,a) \in\cZ$, recalling \eqref{gg3},
\begin{align*}
\hQ^{k+1}(z)=&(1-\lambda_k(z))\hQ^k(z)+\lambda_k(z)G^{k+1}_{x,a}\\
=&\hQ^k(z)+\lambda_k(z)\Big(\cH(\e_{k},\hQ^{k})(z)-\hQ^k(z)  +[G^{k+1}_{x,a}-\cH(\e_{k},\hQ^{k})(z)] \Big)\\
=&\hQ^k(z)+\lambda_k(z)\Big(\cH(\e_{k},\hQ^{k})(z)-\hQ^k(z)  +\indiq_{\{\tau^{k+1}_{x,a}<\infty\}}
[G^{k+1}_{x,a}-\cH(\e_{k},\hQ^{k})(z)] \Big)
\end{align*}
because $\lambda_k(z)=0$ when $\tau^{k+1}_{x,a}=\infty$. Hence
$$
\hQ^{k+1}(z)=\hQ^k(z)+\lambda_k(z)\Big(\cM(\eta_{k},\hQ^{k})(z)-\hQ^k(z)  +\xi_{k+1}(z)\Big)
$$
as desired.
\vip
For $k\geq 0$ and $z=(x,a)\in\cZ$, it holds that
$$
\E[\xi_{k+1}(z) | \cI^k_z]=\indiq_{\{\tau^{k+1}_{x,a}<\infty\}}\Big(\E[G^{k+1}_{x,a}|\cI^k_z] -\cH(\e_{k},\hQ^{k})(z) \Big)
=0.
$$
Indeed, on $\{\tau^{k+1}_{x,a}<\infty\}$, we have $\E[G^{k+1}_{x,a}|\cI^k_z]=Q_{\hpi^{k}}(x,a)$ (i.e.
$\E[G^{k+1}_{x,a}|\cI^k_z]=\cH(\e_{k},\hQ^{k})(x,a)$, see Step 1): recalling \eqref{gg2},
\begin{align*}
\E[G^{k+1}_{x,a}|\cI^k_z]=&\E\Big[R^{k+1}_{\tau^{k+1}_{x,a}+1}\Big|\cI^k_{x,a}\Big] 
+ \gamma \E\Big[\sum_{t\in \nn} \gamma^t R^{k+1}_{\tau^{k+1}_{x,a}+2} \Big|\cI^k_{x,a}\Big] \\
=& \sum_{y\in\cX}P(x,a,y) g(x,a,y) + \gamma \E\Big[\E_{X_{\tau^{k+1}_{x,a}+1},\hpi^k}[G]\Big|\cI^k_{x,a}\Big] 
\end{align*}
by the strong Markov property, recall \eqref{ttt} and \eqref{G}. Recalling that $V_\pi(x)=E_{x,\pi}[G]$,
\eqref{qstar1} and \eqref{qpi}, this gives
$$
\E[G^{k+1}_{x,a}|\cI^k_z] = r(x,a)+ \gamma\sum_{y \in \cX}P(x,a,y) V_{\hpi^k}(y) = Q_{\hpi^k}(x,a)
$$
as desired.
\vip

Moreover,
$$
\E[(\xi_{k+1}(z))^2 | \cI^k_z]=\indiq_{\{\tau^{k+1}_{x,a}<\infty\}}{\rm Var}[G^{k+1}_{x,a}|\cI^k_z] \leq 
\indiq_{\{\tau^{k+1}_{x,a}<\infty\}}\E[(G^{k+1}_{x,a})^2|\cI^k_z].
$$
As previously, using that $(u+v)^2\leq2u^2+2v^2$, we find that on $\{\tau^{k+1}_{x,a}<\infty\}$,
$$
\E[(\xi_{k+1}(z))^2 | \cI^k_z]\leq 2\sum_{y\in\cX}P(x,a,y) \int_\rr z^2 S(x,a,y,\dd z) 
+ 2 \gamma^2 \E\Big[\E_{X_{\tau^{k+1}_{x,a}+1},\hpi^k}[G^2]\Big|\cI^k_{x,a}\Big].
$$
The first term is bounded by $2K$, where $K=\sup_{(x,a)\in\cZ,y\in\cX} \int_\rr z^2 S(x,a,y,\dd z)$ 
(see Setting \ref{set}).
The second term is bounded, because $\sup_{x\in \cX, \pi} \E_{x,\pi}[G^2] <\infty$: thanks to the Minkowski inequality,
$$
\E_{x,\pi}[G^2] \leq \Big( \sum_{t\in\nn} \gamma^t \E_{x,\pi}[R_{t+1}^2]^{1/2} \Big)^2 \leq \frac K{(1-\gamma)^2}.
$$
All in all, we have proved that
$$
\E[(\xi_{k+1}(z))^2 | \cI^k_z]\leq 2 K + \frac {2 \gamma^2K}{(1-\gamma)^2}.
$$

By assumption, see \eqref{cqdev}, the three conditions 
$\lim_{k} \eta_k(z)=\lim_{k} \e_k(x)=0$,
$\sum_{k\geq 0} \lambda_k(z)=\sum_{k\geq 0}\indiq_{\{\tau^{k+1}_{x,a}<\infty\}}\alpha_k(x,a)=\infty$ 
and $\sum_{k\geq 0} (\lambda_k(z))^2=\sum_{k\geq 0}\indiq_{\{\tau^{k+1}_{x,a}<\infty\}}(\alpha_k(x,a))^2<\infty$ 
are a.s. fulfilled for all $z=(x,a)\in\cZ$.
\vip

By Proposition \ref{gprop}, we deduce that a.s., for all $z=(x,a)\in\cZ$, $\lim_k\hQ^k(x,a)=Q^*(x,a)$.
\end{proof}

\section{Convergence of \blue Algorithm 2 \black}\label{convfv}
We now handle the

\begin{proof}[Proof of Theorem \ref{mmrr}]
\blue Algorithm 2 is a particular case of Algorithm 1, \black namely when choosing
$\hQ^0=0$ (so that $\hpi^0(x,a)=|\cA_x|^{-1}$ for all $(x,a)\in\cZ$) and, for all $k\geq 0$ and $(x,a)\in\cZ$, 
$\nu_k(x,a)=\mu_0(x)\hpi^k(x,a)$, $\e_k(x)=(1+N_k(x))^{-\theta}$ and $\alpha_k(x,a)=(N_{k+1}(x,a))^{-1}$ 
\blue (convention: $1/0=0$), \black
where we recall that (convention: $\sum_{i=1}^0=0$)
$$
N_k(x,a)=\sum_{i=1}^k \indiq_{\{\tau^i_{x,a}<\infty\}} \qquad \hbox{and} \qquad N_k(x)=\sum_{a\in\cA_x} N_k(x,a).
$$
Observe that $\alpha_k(x,a)$ is indeed $\cF^k \lor \cG^{k+1}_{\tau^{k+1}_{x,a}}$-measurable.
Indeed, the only issue is to check that for all $k\geq 0$, all $(x,a)\in\cZ$, we have
$$
\bQ^{k+1}(x,a)=(1-\alpha_{k}(x,a)\indiq_{\{\tau^{k+1}_{x,a}<\infty\}})\bQ^{k}(x,a) + 
\alpha_{k}(x,a)\indiq_{\{\tau^{k+1}_{x,a}<\infty\}}G^{k+1}_{x,a},
$$
but this immediately follows from the definition \eqref{fv2} of $\bQ^{k+1}$ and the facts that 
$N_{k+1}(x,a)=N_k(x,a)+\indiq_{\{\tau^{k+1}_{x,a}<\infty\}}$
(when $k=0$, this also uses that $\bQ^0(x,a)=0$). 

\vip

To show that Theorem \ref{mrg} applies, we only have to verify \eqref{cqdev}.
Assume for a moment that for all $(x,a)\in\cZ$, $\lim_k N_k(x,a)=\infty$ a.s.
Then of course $\lim_k N_k(x)=\infty$, whence $\lim_k \e_k(x)=0$ a.s. Moreover,
$$
\sum_{k\geq 0}\alpha_k(x,a)\indiq_{\{\tau^{k+1}_{x,a}<\infty\}}
=\sum_{k\geq 1} \frac{\indiq_{\{\tau^{k}_{x,a}<\infty\}}}{N_{k}(x,a)}=\sum_{\ell \geq 1} \frac 1\ell = \infty,
$$
and 
$$
\sum_{k\geq 0}(\alpha_k(x,a))^2\indiq_{\{\tau^{k+1}_{x,a}<\infty\}}
=\sum_{k\geq 1} \frac{\indiq_{\{\tau^{k}_{x,a}<\infty\}}}{(N_{k}(x,a))^2}
=\sum_{\ell \geq 1} \frac 1 {\ell^2}<\infty.
$$

It thus only remains to show that for each $(x,a)\in\cZ$, that we now fix, $\lim_k N_k(x,a)=\infty$ a.s.
First, since the family $(X^i_0)_{i\geq 1}$ is i.i.d. and $\mu_0$ distributed with $\mu_0(x)>0$,
we conclude from the law of large numbers that a.s.,
\begin{equation}\label{ruru}
N_k(x)\geq \sum_{i=1}^k \indiq_{\{X^i_0=x\}} \sim \mu_0(x) k \qquad \hbox{as $k\to\infty$}.
\end{equation}
Next, since 
$$
N_k(x,a) \geq \sum_{i=1}^k \indiq_{\{X^i_0=x,A^i_0=a\}}=:S_k,
$$
it suffices to show that $\lim_k S_k=\infty$.
We introduce $\Lambda_k=\sum_{i=1}^k \PP(X^i_0=x,A^i_0=a | \cF^{i-1})$, so that $(M_k=S_k-\Lambda_k)_{k\geq 0}$
is a $(\cF^k)_{k\geq 0}$-martingale (with $S_0=\Lambda_0=0$).
We recall that by construction, see \eqref{fv1}, for all $i\geq 1$,
$$
\PP(X^i_0=x,A^i_0=a|\cF^{i-1})=\mu_0(x)\bpi^{i-1}(x,a)\geq \mu_0(x)\frac{\e_{i-1}(x)}{|\cA_{x}|}= 
\frac{\mu_0(x)}{|\cA_{x}|(1+N_{i-1}(x))^\theta}.
$$
Hence \eqref{ruru} implies that $\lim_k \Lambda_k=\infty$ a.s., because $\theta \in (0,1]$.
For \blue $A\in \nn$, \black we introduce the stopping time $\sigma_A=\inf\{k\geq 0 : S_k\geq A\}$. 
For all $k\geq 1$, We have $\E[M_{\sigma_A\land k}]=0$,
i.e. $\E[S_{\sigma_A\land k}]=\E[\Lambda_{\sigma_A\land k}]$. Hence $\E[\Lambda_{\sigma_A\land k}] \leq A$ whence,
by monotone convergence, $\E[\Lambda_{\sigma_A}] \leq A$. This implies that $\Lambda_{\sigma_A}<\infty$ a.s.,
whence $\sigma_A<\infty$ a.s. because $\lim_k \Lambda_k=\infty$. Since this holds true for all 
\blue $A\in \nn$, \black we conclude that $\lim_k S_k=\infty$ as desired.
\end{proof}

\section{The case where the episodes are finite}\label{mrs}

In this section, we grant Assumption \ref{tri}. We set 
$\cZ_\triangle=\{(x,a) : x \in \cX\setminus\triangle, a \in\cA_x\}$.

\begin{proof}[Proof of Remark \ref{any}] We start with (i).
For a given starting point $x \in \cX$ and two SM policies $\pi,\pi'$ coinciding on $\cZ_\triangle$,
we \blue claim that we can build simultaneously, by coupling, \black the corresponding processes 
$(X_0,A_0,X_1,R_1,A_1,\dots,X_{t},R_t,A_t,\dots )$ 
and $(X_0',A_0',X_1',R_1',A_1',\dots,X_{t}',R_t',A_t',\dots )$  such that
$T_\triangle=T'_\triangle$  and
\begin{align*}
(X_0,A_0,X_1,R_1,A_1,\dots,X_{T_\triangle-1},&R_{T_\triangle-1},A_{T_\triangle-1},X_{T_\triangle})\\
=&(X_0',A_0',X_1',R_1',A_1',\dots,X'_{T'_\triangle-1},R'_{T'_\triangle-1},A'_{T'_\triangle-1},X'_{T'_\triangle}),
\end{align*}
where $T_\triangle, T'_\triangle$ are the entrance times of $(X_t)_{t\geq 0}$ and 
$(X_t')_{t\geq 0}$ in $\triangle$. 

\vip

\blue Indeed, set $X_0=X_0'=x$. If $x\notin \Delta$ (else the claim is obvious), choose 
$A_0\sim\pi(x,\cdot)$  and $A_0'=A_0$ (which is relevant since $\pi'(x,\cdot)=\pi(x,\cdot)$),
choose $X_1\sim P(X_0,A_0,\cdot)$ and $X'_1=X_1$, choose $R_1\sim S(X_0,A_0,X_1,\cdot)$ and $R_1'=R_1$.
If $X_1\notin \Delta$ (else complete the construction arbitrarily), choose $A_1\sim\pi(X_1,\cdot)$  
and $A_1'=A_1$ (which is relevant since $\pi'(X_1,\cdot)=\pi(X_1,\cdot)$),
choose $X_2\sim P(X_1,A_1,\cdot)$ and $X'_2=X_2$, choose $R_2\sim S(X_1,A_1,X_2,\cdot)$ and $R_2'=R_2$. Etc.
\black

\vip

Moreover, $R_{t+1}=R_{t+1}'=0$ for all $t\geq T_\triangle=T'_\triangle$,
since the set $\triangle$ is absorbing and since $S(x,a,y,\dd z)=\delta_0(\dd z)$ as soon as $x\in \triangle$.
Thus $G=G'$, where
$$
G=\sum_{t\geq 0} \gamma^t R_{t+1}=
\sum_{t=0}^{T_\triangle-1}\gamma^t R_{t+1} \qquad \hbox{and} \qquad G'=\sum_{t\geq 0} \gamma^t R'_{t+1}
=\sum_{t=0}^{T'_\triangle-1}\gamma^t R'_{t+1},
$$
with the convention that $\sum_{t=0}^{-1}=0$.
Recalling \eqref{vpi}, we conclude that indeed, $V_\pi(x)=V_{\pi'}(x)$ for all $x\in \cX\setminus\triangle$.
Finally, $V_\pi(x)=V_{\pi'}(x)=0$ when $x\in\triangle$.

\vip

For (ii), consider a {\it positive} SM policy $\pi$. Recall that when using $\pi$, the process $(X_t)_{t\geq 0}$
is a Markov chain with transition matrix $P_\pi(x,y)=\sum_{a\in\cA_x}\pi(x,a)P(x,a,y)$.
Clearly, $P_\pi(x,y)>0$ if and only if \blue $\bar P(x,y)=\max_{a \in\cA_x} P(x,a,y)>0$. \black Hence we deduce from
Assumption \ref{tri} that for all $x\in\cX$, there are $y\in\triangle$ and $n\geq 0$
such that $P_\pi^n(x,y)>0$. The state space $\cX$ being finite, we classically conclude that indeed,
$T_\triangle=\inf\{t\geq 0 : X_t \in \triangle\}<\infty$ a.s. under $\PP_{x,\pi}$ for all $x\in\cX$.
\end{proof}

We next explain the

\begin{proof}[Proof of Remark \ref{plur}]
Here we use simultaneously \blue Algorithms 2 and 3, with the same random elements. \black
\vip
For all $(x,a) \in \cZ$,  we have $\tQ^0(x,a)=\bQ^0(x,a)=\tN_0(x,a)=N_0(x,a)=\tN_0(x)=N_0(x)=0$.

\vip

Hence  $\tpi^0(x,a)=\bpi^0(x,a)$.
Thus we can build the same first episode in both algorithms (stopped when reaching $\triangle$, at time 
$T^1_\triangle$, for the second one).
Since $\triangle$ is absorbing whatever the policy and since $R^1_{t}=0$ for all $t\geq T^1_\triangle$,
we conclude that for all $(x,a)\in\cZ_\triangle$, $\ttau^1_{x,a}=\tau^1_{x,a}$ and $\tG^1_{x,a}=G^1_{x,a}$,
whence $\tN_1(x,a)=N_1(x,a)$, $\tN_1(x)=N_1(x)$ and $\tQ^1(x,a)=\bQ^1(x,a)$. 
\vip

Thus $\tpi^1(x,a)=\bpi^1(x,a)$ for all $(x,a) \in \cZ_\triangle$, and we can build the same second episode 
in both algorithms (stopped when reaching $\triangle$, at time 
\blue $T^2_\triangle$, \black for the second one). Observe that this does not require that 
$\tpi^1(x,a)=\bpi^1(x,a)$ when 
$x \in \triangle$, because such values are never used in \blue Algorithm 3. \black
Since $\triangle$ is absorbing whatever the policy and since $R^2_{t}=0$ for all $t\geq T^2_\triangle$,
we conclude that for all $(x,a)\in\cZ_\triangle$, $\ttau^2_{x,a}=\tau^2_{x,a}$ and $\tG^2_{x,a}=G^2_{x,a}$,
whence $\tN_2(x,a)=N_2(x,a)$, $\tN_2(x)=N_2(x)$ and $\tQ^2(x,a)=\bQ^2(x,a)$. 
\vip

Iterating this argument, we find that  $\tpi^k(x,a)=\bpi^k(x,a)$ for all $(x,a) \in \cZ_\triangle$,
all $k\geq 0$.
\end{proof}

\section{Counter-example}\label{counterex}

\blue 
We first assume that $\gamma \in (1/2,1)\cap \Q$ and explain in Remark \ref{rkHH} how to treat 
the general case $\gamma\in (1/2,1)$.
\black

\begin{proof}[Proof of Proposition \ref{ptb} \blue when $\gamma \in \Q\cap(1/2,1)$ \black]
We recall that $\cX=\{\zz\}$, that $\cA_0=\{0,1\}$, that $P(\zz,0,\zz)=P(\zz,1,\zz)=1$, and that
$S(\zz,0,\zz,\cdot)=\delta_0$ and $S(\zz,1,\zz,\cdot)=\delta_1$. In other words, there is only one possible state
and two actions. When one chooses the action $0$, the (deterministic) reward is $0$, and when
one chooses the action $1$, the (deterministic) reward is $1$.
There are of course two extremal SM policies, that we denote by $\pi_0$ and $\pi_1$, and that are given by
$$
\pi_0(\zz,0)=1,\quad \pi_0(\zz,1)=0 \qquad \hbox{and} \qquad \pi_1(\zz,0)=0,\quad \pi_1(\zz,1)=1.
$$

Let us mention, although we will not use it, that using Theorem \ref{known} and \eqref{qpi},
with $r(\zz,0)=0$, $r(\zz,1)=1$ and $P_{\pi_0}(\zz,\zz)=P_{\pi_1}(\zz,\zz)=1$, 
one finds that $Q^*(\zz,0)=\gamma/(1-\gamma)$, $Q^*(\zz,1)=1/(1-\gamma)$, so that 
the optimal SM strategy is $\pi^*=\pi_1$ and that
\begin{gather*}
V_{\pi_0}(\zz)=0,\quad Q_{\pi_0}(\zz,0)=0,\quad Q_{\pi_0}(\zz,1)=1, \\
V_{\pi_1}(\zz)=\frac 1 {1-\gamma},\quad  Q_{\pi_1}(\zz,0)=\frac \gamma{1-\gamma},\quad 
Q_{\pi_0}(\zz,1)=\frac 1 {1-\gamma}.
\end{gather*}
For $Q:\cZ \to \rr$,  we will use the notation $Q=(u,v)$, where $u=Q(\zz,0)$ and
$v=Q(\zz,1)$.
\vip

We now design $\hQ^0=(u_0,v_0)$ and the families $(\nu_k)_{k\geq 0}$ and $(\alpha_k)_{k\geq 0}$
such that when applying \blue Algorithm 1 \black with $\e_k(\zz)=0$ for all $k\geq 0$,
the limit $\lim_{k}V_{\hpi^k}(\zz)$ does a.s. not exist.

\vip

{\it Step 1.} We introduce the zones, see Figure 1,
\begin{align*}
Z_1=&\Big\{(u,v) \in \rr^2 : v>u>1 \quad \hbox{and}\quad  
v \in \Big(\frac 1{2(1-\gamma)},\frac{1+2\gamma}{4(1-\gamma)}\Big)\Big\},\\
Z_2=&\Big\{(u,v) \in \rr^2 :v<u<\frac\gamma{1-\gamma}\quad \hbox{and}\quad  
v \in \Big(\frac 1{2(1-\gamma)},\frac{1+2\gamma}{4(1-\gamma)}\Big)\Big\},\\
Z_3=&\Big\{(u,v) \in \rr^2 : v<u<\frac\gamma{1-\gamma}\quad \hbox{and}\quad  
v \in \Big(\frac{3-2\gamma}{4(1-\gamma)},\frac 1{2(1-\gamma)}\Big)\Big\},\\
Z_4=&\Big\{(u,v) \in \rr^2 : v>u>1  \quad \hbox{and}\quad  
v \in \Big(\frac{3-2\gamma}{4(1-\gamma)},\frac 1{2(1-\gamma)}\Big)\Big\}.
\end{align*}
\begin{figure}[t]
\noindent\fbox{\begin{minipage}{0.95\textwidth}
\centerline{\includegraphics[width=8cm]{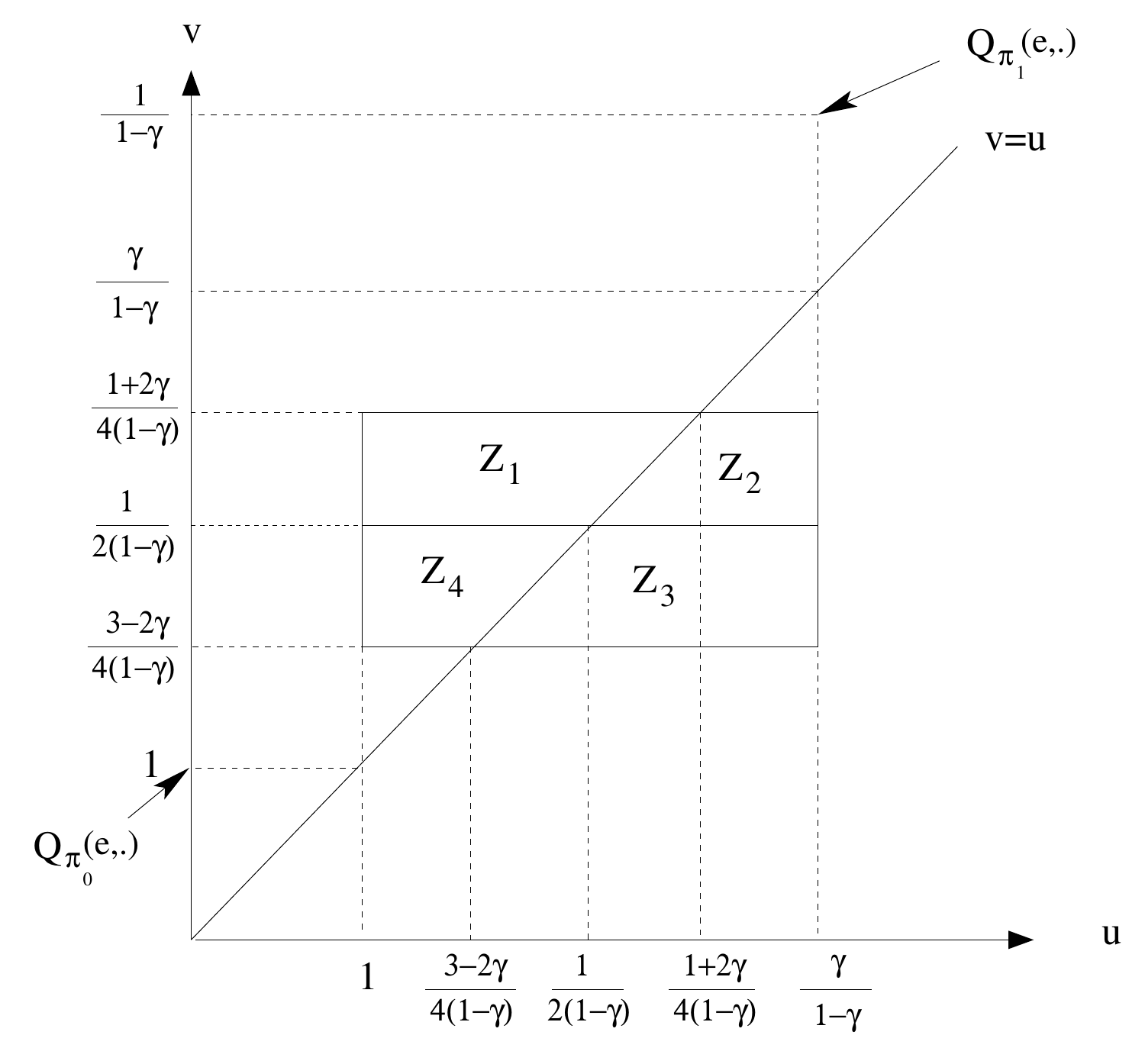}}
\caption{\label{fig}}
\end{minipage}}
\end{figure}
We choose $(u_0,v_0)$ in $Z_1$ such that $u_0 \in \Q$ and $v_0 \in \rr\setminus\Q$ and
set $\hQ^0=(u_0,v_0)$.
We also fix 
\begin{equation}\label{qq}
q \in \Q \cap \Big(0,\frac\gamma 2 - \frac14\Big),
\quad \hbox{whence}\quad q< \frac14\land \Big(\gamma -\frac12\Big)\land \Big(\frac\gamma2 -\frac14\Big)
\land \frac{2\gamma-1}{3-2\gamma}.
\end{equation}
For all $k\geq 0$ and for $a\in \{0,1\}$, we define
(once the $k-1$ first episodes have been built)
$$
\alpha_k(\zz,a)=\frac{q\rho_k(a)}{1+L_k(a)}, \quad\hbox{where}\quad L_k(a)=\sum_{i=0}^k \rho_i(a)
\quad\hbox{and}\quad \rho_i(a)=\left\{\begin{array}{lll}
\indiq_{\{a=0\}} & \hbox{if}& \hQ^i \in Z_1\cup Z_3, \\
\indiq_{\{a=1\}} & \hbox{if}& \hQ^i \in Z_2\cup Z_4,
\end{array}\right.
$$ 
and we set
$$
\blue \nu_k(\zz,a)=\rho_k(a).
$$

{\it Step 2.} 
We now check that for all $k\geq 0$, all $i \in \{1,2,3,4\}$, 
if $\hQ^k=(u_k,v_k)\in Z_i$ with $u_k \in \Q$ and $v_k\in \rr\setminus \Q$, then, with the convention that
$Z_5=Z_1$, 
\vip
$\bullet$ $\hQ^{k+1}=(u_{k+1},v_{k+1})\in Z_i \cup Z_{i+1}$ with also
$u_{k+1} \in \Q$ and $v_{k+1}\in \rr\setminus \Q$;
\vip
$\bullet$ there is $\ell>k$ such that $\hQ^{\ell} \in Z_{i+1}$.
\vip
This in particular implies
that $\hQ^k$ belongs to $Z_1\cup Z_2\cup Z_3\cup Z_4$ for all $k\geq 0$, so that 
the definitions of $\alpha_k$ and $\nu_k$ are sufficient to produce
the whole algorithm.

\vip

\blue Here we use that $\gamma \in \Q$, that $u_0 \in \Q$ and $v_0 \notin \Q$ to guarantee that for all $k\geq 0$,
$(u_k,v_k)$ never lies on the diagonal (as we will see, $u_k \in \Q$ and $v_k \notin \Q$ for all $k\geq 0$). 
Else, the policy of Algorithm 1 would be randomized, leading to some complications.
\black 
\vip
(a) If first $\hQ^k \in Z_1$, then $\hpi^k=\pi_1$ 
(recall \eqref{gg1} and that $\e_k\equiv 0$).
Moreover, we have $\nu_k(\zz,a)=\indiq_{\{a=0\}}$. Hence $\tau_{\zz,0}^{k+1}=0$
and  $\tau_{\zz,1}^{k+1}=1$, and $G_{\zz,0}^{k+1}=\gamma/(1-\gamma)$ and $G_{\zz,1}^{k+1}=1/(1-\gamma)$.
By \eqref{gg3} and by definition of $\alpha_k$, we find
\begin{equation}\label{aaa1}
u_{k+1}=(1-\alpha_k(\zz,0))u_k+\alpha_k(\zz,0)\frac\gamma{1-\gamma}
=u_k + \frac q{1+L_k(0)}\Big(\frac\gamma{1-\gamma}-u_k\Big)\quad\hbox{and}\quad v_{k+1}=v_k.
\end{equation}
Hence $u_{k+1}\in\Q$ and $v_{k+1}\in\rr\setminus\Q$, recall that $\gamma \in \Q$ by assumption. 
Moreover, $\hQ^{k+1} \in Z_1\cup Z_2$, because $v_{k+1}\neq u_{k+1}$ and, using that 
$u_k \in (1,(1+2\gamma)/[4(1-\gamma)])$ and that $q<1/4$, see \eqref{qq},
\begin{gather*}
v_{k+1}=v_k \in \Big(\frac 1{2(1-\gamma)},\frac{1+2\gamma}{4(1-\gamma)}\Big) ,\\
1< u_{k}<u_{k+1}< u_k+q \Big(\frac \gamma{1-\gamma}-u_k\Big) < \frac{1+2\gamma}{4(1-\gamma)}
+q \frac {2\gamma-1}{1-\gamma} < \frac \gamma{1-\gamma}.
\end{gather*}
Assume next that $\hQ^\ell \in Z_1$ for all $\ell \geq k$, then for all $n \geq k$,
we would have $L_n(0)=L_{k}(0)+\ell-k$ and, by \eqref{aaa1} and since 
$u_n <(1+2\gamma)/[4(1-\gamma)]$ for all $n\geq k$, for all $\ell\geq k$, 
$$
u_\ell=u_{k}+  \sum_{n=k}^{\ell-1} \Big(\frac\gamma{1-\gamma}-u_n\Big)\frac{q}{1+L_n(0)}
\geq u_{k}+ \frac{(2\gamma-1)q}{4(1-\gamma)}\sum_{n=k}^{\ell-1} \frac1{1+L_{k}(0)+n-k}.
$$
This would imply that $u_\ell \to \infty$ as $\ell\to \infty$ and contradict 
the fact that  $\hQ^\ell \in Z_1$ for all $\ell\geq k$.
\vip

(b) If next $\hQ^k \in Z_2$, then $\hpi^k=\pi_0$ and $\nu_k(\zz,a)=\indiq_{\{a=1\}}$. Hence
$\tau_{\zz,0}^{k+1}=1$ and  $\tau_{\zz,1}^{k+1}=0$, while $G_{\zz,0}^{k+1}=0$ and $G_{\zz,1}^{k+1}=1$.
By \eqref{gg3} and by definition of $\alpha_k$, we find
\begin{equation}\label{aaa2}
u_{k+1}=u_k \qquad \hbox{and}\qquad  v_{k+1}=(1-\alpha_k(\zz,1))v_k+\alpha_k(\zz,1)\times 1
=v_{k}- \frac q{1+L_k(1)} (v_k-1).
\end{equation}
Thus $u_{k+1}\in\Q$ and $v_{k+1}\in\rr\setminus\Q$.
Moreover, $\hQ^{k+1} \in Z_2\cup Z_3$, because $u_{k+1}\neq v_{k+1}$ and, 
since $v_k>1/[2(1-\gamma)]$ and since $q<\gamma-1/2$, see \eqref{qq},
\begin{gather*}
u_{k+1}=u_k \in \Big(\frac 1{2(1-\gamma)},\frac{\gamma}{1-\gamma}\Big) ,\\
u_{k+1}=u_k>v_{k}> v_{k+1}>  v_k-q(v_k-1) > v_k(1-q)> \frac{1-q}{2(1-\gamma)}>\frac{3-2\gamma}{4(1-\gamma)}.
\end{gather*}
If $\hQ^\ell \in Z_2$ for all $\ell\geq k$, then for all $n\geq k$, we would have
$L_n(1)=L_{k}(1)+n-k$ and, by \eqref{aaa2} and since $v_n >1/[2(1-\gamma)]$ for all $n\geq k$,
for all $\ell \geq k$,
$$
v_\ell=v_{k}- \sum_{n=k}^{\ell-1} \frac{q (v_n-1)}{1+L_n(1)}
\leq v_{k}- \frac{(2\gamma-1)q}{2(1-\gamma)}\sum_{n=k}^{\ell-1} \frac{1}{1+L_{k}(1)+n-k}.
$$
This would imply that $v_\ell \to -\infty$ as $\ell\to \infty$ and contradict 
the fact that  $\hQ^\ell \in Z_2$ for all $\ell\geq k$.
\vip

(c) If now $\hQ^k \in Z_3$, then $\hpi^k=\pi_0$ and $\nu_k(\zz,a)=\indiq_{\{a=0\}}$. Hence
$\tau_{\zz,0}^{k+1}=0$ and  $\tau_{\zz,1}^{k+1}=\infty$, and $G_{\zz,0}^{k+1}=0$.
By \eqref{gg3} and by definition of $\alpha_k$, we find
\begin{equation}\label{aaa3}
u_{k+1}=(1-\alpha_k(\zz,0))u_k+\alpha_k(\zz,0)\times 0 =u_{k}- \frac q{1+L_k(0)} u_k
\qquad \hbox{and}\qquad  v_{k+1}=v_k.
\end{equation}
Thus $u_{k+1}\in\Q$ and $v_{k+1}\in\rr\setminus\Q$.
Moreover, $\hQ^{k+1} \in Z_3\cup Z_4$, because $v_{k+1}\neq u_{k+1}$ and, 
since $u_k>(3-2\gamma)/[4(1-\gamma)]$ and $q<(2\gamma-1)/(3-2\gamma)$, see \eqref{qq},
\begin{gather*}
v_{k+1}=v_k \in \Big(\frac {3-2\gamma}{4(1-\gamma)},\frac1{2(1-\gamma)}\Big) ,\\
\frac\gamma{1-\gamma}>u_{k}>u_{k+1}> u_k(1-q)>\frac{(1-q)(3-2\gamma)}{4(1-\gamma)}>1.
\end{gather*}
If $\hQ^\ell \in Z_3$ for all $\ell\geq k$, then for all $n\geq k$, we would have
$L_n(0)=L_{k}(0)+n-k$ and, by \eqref{aaa3} and  since
$u_n >1$ for all $n\geq k$, for all $\ell \geq k$,
$$
u_\ell=u_{k}-  \sum_{n=k}^{\ell-1} \frac{q u_n}{1+L_n(0)}
\leq u_{k}- q\sum_{n=k}^{\ell-1} \frac1{1+L_{k}(0)+n-k}.
$$
This would imply that $u_\ell \to -\infty$ as $\ell\to \infty$ and contradict 
the fact that  $\hQ^\ell\in Z_3$ for all $\ell\geq k$.

\vip
(d) If finally $\hQ^k \in Z_4$, then $\hpi^k=\pi_1$ and $\nu_k(\zz,a)=\indiq_{\{a=1\}}$. Hence
$\tau_{\zz,0}^{k+1}=\infty$ and  $\tau_{\zz,1}^{k+1}=0$, and $G_{\zz,1}^{k+1}=\frac 1{1-\gamma}$.
By \eqref{gg3} and by definition of $\alpha_k$, we find
\begin{equation}\label{aaa4}
u_{k+1}=u_k \qquad \hbox{and}\qquad  v_{k+1}=(1-\alpha_k(\zz,1))v_k + \frac{\alpha_k(\zz,1)}{1-\gamma}
=v_k + \frac{q}{1+L_k(1)}\Big(\frac 1{1-\gamma}-v_k\Big).
\end{equation}
Thus $u_{k+1}\in\Q$ and $v_{k+1}\in\rr\setminus\Q$.
Moreover, $\hQ^{k+1} \in Z_4\cup Z_1$, because  $v_{k+1}\neq u_{k+1}$ and, since $v_k <1/[2(1-\gamma)]$
and since $q<\gamma/2-1/4$, see \eqref{qq},
\begin{gather*}
u_{k+1}=u_k \in \Big(1,\frac1{2(1-\gamma)}\Big) ,\\
1<u_{k+1}=u_k<v_k<v_{k+1}<v_k+ q\Big(\frac 1{1-\gamma}-v_k\Big)
<\frac1{2(1-\gamma)}+\frac q{1-\gamma} <\frac{1+2\gamma}{4(1-\gamma)}.
\end{gather*}
If  $\hQ^\ell \in Z_4$ for all $\ell\geq k$, then for all $n\geq k$, we would have
$L_n(1)=L_{k}(1)+n-k$ and, by \eqref{aaa4} and  since
$v_n <1/[2(1-\gamma)]$ for all $n\geq k$, for all $\ell \geq k$,
$$
v_\ell=v_{k}+ \sum_{n=k}^{\ell-1} \frac{q}{1+L_n(1)}\Big(\frac 1{1-\gamma}-v_n\Big)
\geq v_{k} + \frac q{2(1-\gamma)}\sum_{n=k}^{\ell-1} \frac1{1+L_{k}(1)+n-k}.
$$
This would imply that $v_\ell \to \infty$ as $\ell\to \infty$ and contradict the fact that  
$\hQ^\ell \in Z_4$ for all $\ell\geq k$.
\vip

{\it Step 3.} We conclude that $V_{\hpi^k}(\zz)$ does not converge as $k\to \infty$,
because $V_{\hpi^k}(\zz)=V_{\pi_1}(\zz)=1/(1-\gamma)$ for those $k$'s such that $\hQ^k \in Z_1\cup Z_4$
(there are infinitely many such $k$'s by Step 2),
while  $V_{\hpi^k}(\zz)=V_{\pi_0}(\zz)=0$ for those $k$'s such that $\hQ^k \in Z_2\cup Z_3$
(there are infinitely many such $k$'s by Step 2).
It only remains to show \eqref{cqdev}. But, recalling the definition of $\alpha_k$
and that $\tau^{k+1}_{\zz,0}<\infty$ when $\hQ^k \in Z_1\cup Z_3$, see Steps 2-(a)-(c),
we find
$$
\sum_{k\geq 0} \alpha_k(\zz,0)\indiq_{\{\tau^{k+1}_{\zz,0}<\infty\}} = \sum_{k\geq 0} \frac{q \indiq_{\{\hQ^k \in Z_1\cup Z_3\}}}
{1+\sum_{i=0}^k\indiq_{\{\hQ^i \in Z_1\cup Z_3\}} } = \sum_{\ell \geq 1} \frac{q }{1+\ell}=\infty.
$$
We used that $\sum_{i=0}^k\indiq_{\{\hQ^i \in Z_1\cup Z_3\}}=\infty$ by Step 2. Next,
$$
\sum_{k\geq 0} (\alpha_k(\zz,0))^2\indiq_{\{\tau^{k+1}_{\zz,0}<\infty\}} = \sum_{k\geq 0} \frac{q^2 \indiq_{\{\hQ^k \in Z_1\cup Z_3\}}}
{(1+\sum_{i=0}^k\indiq_{\{\hQ^i \in Z_1\cup Z_3\}})^2} = \sum_{\ell \geq 1} \frac{q^2 }{(1+\ell)^2}<\infty.
$$
Similarly, using that $\tau^{k+1}_{\zz,1}<\infty$ when $\hQ^k \in Z_2\cup Z_4$, see Steps 2-(b)-(d),
$$
\sum_{k\geq 0} \alpha_k(\zz,1)\indiq_{\{\tau^{k+1}_{\zz,1}<\infty\}} = \sum_{k\geq 0} \frac{q \indiq_{\{\hQ^k \in Z_2\cup Z_4\}}}
{1+\sum_{i=0}^k\indiq_{\{\hQ^i \in Z_2\cup Z_4\}} } = \sum_{\ell \geq 1} \frac{q }{1+\ell}=\infty,
$$
because $\sum_{i=0}^k\indiq_{\{\hQ^i \in Z_2\cup Z_4\}}=\infty$ by Step 2, and
$$
\sum_{k\geq 0} (\alpha_k(\zz,1))^2\indiq_{\{\tau^{k+1}_{\zz,1}<\infty\}} = \sum_{k\geq 0} \frac{q^2 \indiq_{\{\hQ^k \in Z_2\cup Z_4\}}}
{(1+\sum_{i=0}^k\indiq_{\{\hQ^i \in Z_2\cup Z_4\}})^2 } = \sum_{\ell \geq 1} \frac{q^2 }{(1+\ell)^2}<\infty.
$$
The proof is complete.
\end{proof}

\begin{rk}\label{rkHH}
\blue When $\gamma \in (1/2,1)$ does not belong to $\Q$, one can adapt the above proof using the 
following trick: consider some
dense subset $\HH$ of $\rr$ such that $\rr\setminus\HH$ is also dense and such that for any 
$p \in \Q^*$, any $p',p'' \in \Q$, for any $u,v \in \rr$,
\vip
\noindent $\bullet$ $u \in \HH$ implies that $p u + p' \gamma/(1-\gamma) \in \HH$,
\vip
\noindent $\bullet$ $v \notin \HH$ implies that 
$pv+p'+p''/(1-\gamma) \notin \HH$.
\vip
If  $u_0 \in \HH$ and $v_0 \notin \HH$, this will imply that for all $k\geq 1$, $u_k \in \HH$ 
and $v_k \notin \HH$, so that $u_k\neq v_k$.
\vip
Such a set can be defined as $\HH=\{q+q'/(1-\gamma)+q'' \gamma/(1-\gamma) : q,q',q'' \in \Q\}$, which 
is dense since it contains $\Q$ and of which the complementary set is dense since $\HH$ is countable. 
The other properties are easily checked.
\black
\end{rk}

\appendix
\section{Quick proofs of know results}\label{qpkr}

Here we recall, for the sake of completeness, the 

\begin{proof}[Proof of Theorem \ref{known}.] We adopt the notation
introduced in Subsections \ref{mod} and \ref{opol}.
\vip

{\it Step 1.} For $f:\cX\to\rr$ and $x\in \cX$, we set $\cT(f)(x)=\max_{a\in\cA_x}[r(x,a)+\gamma Pf(x,a)]$,
where $Pf(x,a)=\sum_{y\in \cX} P(x,a,y)f(y)$.
For $f,g:\cX\to\rr$, it holds that
$$
||\cT(f)-\cT(g)||_\infty= \gamma \max_{x \in \cX, a\in \cA_x}|Pf(x,a)-Pg(x,a)|\leq \gamma ||f-g||_\infty.
$$
Since $\gamma \in [0,1)$, we conclude that 
there exists a unique function $\tV : \cX \to \rr$ such that $\cT(\tV)=\tV$.
\vip

{\it Step 2.} For $N\geq 1$, set $G_N=\sum_{t=0}^{N-1}\gamma^t R_{t+1}$.
For any policy $\Pi$, any $x \in \cX$, set $V_\Pi^N(x)=\E_{x,\Pi}[G_N]$. 
Let ${\bf 0}:\cX\to\rr$ be the null function.
For all $x\in \cX$, it holds that 
$$
V^N_\Pi(x) \leq \cT^{\circ N}({\bf 0})(x).
$$ 
\blue Here $\cT^{\circ N}=\cT\circ\cdots\circ\cT$ with $N-1$ circles. \black
\vip

It suffices to show that for all $k\in \ig 1, N \id$,
$$
\E_{x,\Pi}\Big[\sum_{t=N-k}^{N-1} \gamma^t R_{t+1}\Big|H_{N-k},A_{N-k}\Big]\leq \gamma^{N-k} \cT^{\circ k}({\bf 0})(X_{N-k}),
$$
recall that $H_t$ was defined in Subsection \ref{mod}. With $k=N$, this gives the result. First, when $k=1$,
$$
\E_{x,\Pi}\Big[\gamma^{N-1}R_{N}\Big|H_{N-1},A_{N-1}\Big]=\gamma^{N-1}r(X_{N-1},A_{N-1}) 
\leq \gamma^{N-1} \cT({\bf 0})(X_{N-1}).
$$
Next, assuming that the inequality holds true for some $k \in \ig 1, N-1 \id$,
\begin{align*}
&\E_{x,\Pi}\Big[\sum_{t=N-k-1}^{N-1} \gamma^t R_{t+1}\Big|H_{N-k-1},A_{N-k-1}\Big]\\
\leq & \E_{x,\Pi}[\gamma^{N-k-1}R_{N-k}|H_{N-k-1},A_{N-k-1}]
+\E_{x,\Pi}[\gamma^{N-k} \cT^{\circ k}({\bf 0})(X_{N-k}) |H_{N-k-1},A_{N-k-1}]\\
=  & \gamma^{N-k-1}\Big(r(X_{N-k-1},A_{N-k-1}) + \gamma P \cT^{\circ k}({\bf 0})(X_{N-k-1},A_{N-k-1}) \Big)\\
\leq & \gamma^{N-k-1}\cT^{\circ (k+1)}({\bf 0})(X_{N-k-1}).
\end{align*}

{\it Step 3.} We show here that for all policy $\Pi$, all $x\in \cX$, we have $V_\Pi(x) \leq \tV(x)$. 
This of course implies that $V^*(x) \leq \tV(x)$ for all $x\in \cX$.

\vip
We have $V_\Pi(x)=\lim_{N\to \infty} V^N_\Pi(x)$, because, recalling Setting \ref{set},
$$
|V_\Pi(x)-V^N_\Pi(x)|\leq 
\sum_{t\geq N} \gamma^t |\E_{x,\Pi}[R_N]| \leq ||g||_\infty \sum_{t\geq N} \gamma^t \to 0.
$$
Hence by Step 2, $V_\Pi(x)\leq \lim_{N\to \infty} \cT^{\circ N}({\bf 0})(x)$.
This last quantity equals $\tV(x)$ by Step 1.
\vip

{\it Step 4.} For any SM policy $\pi$, for $f:\cX\to \rr$ and for $x \in \cX$, we set 
$T_\pi(f)(x)=r_\pi(x)+\gamma P_\pi f(x)$, recall \eqref{rpiPpi}. 
We check in this step that $V_\pi$ is the unique fixed point of $T_\pi$.

\vip
First, $T_\pi$ is a contraction, since $||T_\pi f -T_\pi g||_\infty= \gamma ||P_\pi f -P_\pi g||_\infty
\leq \gamma ||f -g||_\infty$. 
Next,
$$
V_\pi(x)=\E_{x,\pi}\Big[R_1+ \sum_{t\geq 1}\gamma^t R_{t+1}\Big]
=r_\pi(x)+\gamma \E_{x,\pi}\Big[ \E_{x,\pi}\Big[ \sum_{t\geq 1}\gamma^{t-1} R_{t+1}\Big| H_1\Big]\Big]
=r_\pi(x)+\gamma \E_{x,\pi}[V_\pi(X_1)].
$$
Since $\E_{x,\pi}[V_\pi(X_1)]=P_\pi V_\pi(x)$, we conclude that $V_\pi(x)=r_\pi(x)+\gamma P_\pi V_\pi(x)=T_\pi(V_\pi)(x)$.
\vip

{\it Step 5.} Let $\pi^*$ be a SM policy satisfying 
\begin{equation}\label{trruc2}
\pi^*\Big(x, \argmax \tilde Q (x,\cdot)\Big)=1 \quad \hbox{for all $x \in \cX$, where} \quad 
\tilde Q(x,a)=r(x,a)+\gamma P \tV (x,a).
\end{equation}
Here we prove that then, 
$\tV = T_{\pi^*}(\tV)$. By Step 4, we will conclude that $\tV=V_{\pi^*}$, whence $V_{\pi^*}\geq V^*$ by Step 3.
By definition of $V^*$, this implies that $V_{\pi^*}=V^*=\tV$.
\vip
By definition, see Step 1, we have, for all $x \in \cX$,
$$
\tV(x)=\max_{a \in \cA_x} [r(x,a)+ \gamma P \tV (x,a)]= \sum_{a \in \cA_x}[r(x,a)+ \gamma P \tV (x,a)]\pi^*(x,a)
$$
by \eqref{trruc2}. Hence 
$$
\tV(x)=r_{\pi^*}(x)+\gamma\sum_{a \in \cA_x} \sum_{y \in \cX}P(x,a,y)\tV(y)\pi^*(x,a)=
r_{\pi^*}(x)+\gamma\sum_{y \in \cX} \Big(\sum_{a \in \cA_x} P(x,a,y)\pi^*(x,a)\Big) \tV(y),
$$
i.e. $\tV(x)=r_{\pi^*}(x)+ \gamma P_{\pi^*} \tV (x)=T_{\pi^*}(\tV)(x)$.
\vip

{\it Conclusion.} By Step 5, we know that $\tV=V^*$. Hence $\tilde Q=Q^*$ (see \eqref{qstar} and \eqref{trruc2}), 
so that $\pi^*$ satisfies \eqref{trruc}
if and only if it satisfies \eqref{trruc2}. Such a SM policy $\pi^*$ satisfies $V_{\pi^*}(x)=V^*(x)$
for all $x \in \cX$ by Step 5, which shows (i). 

\vip

Let now $\pi$ by any SM policy. Recalling \eqref{qpi}, we have $Q_\pi=r+\gamma PV_\pi\leq
r+\gamma PV^*=Q^*$, see \eqref{qstar}.
We have seen that $V_\pi=T_\pi(V_\pi)=r_\pi+\gamma P_\pi V_\pi$ in Step 4. Moreover, we have
$$
\sum_{a\in \cA_x}Q_\pi(x,a)\pi(x,a)=\sum_{a\in \cA_x}r(x,a)\pi(x,a)+\gamma \sum_{a\in \cA_x}PV_\pi(x,a)\pi(x,a)=
r_\pi(x)+\gamma P_\pi V_\pi(x)=V_\pi(x).
$$
Finally, 
$$
Q_\pi(x,a)=r(x,a)+\gamma\sum_{y \in \cX} P(x,a,y)V_\pi(y)=r
(x,a)+\gamma\sum_{y \in \cX} P(x,a,y)\sum_{b\in \cA_y}Q_\pi(y,b)\pi(y,b),
$$
and we have checked (ii).

\vip
If $\pi^*$ satisfies \eqref{trruc} (or equivalently \eqref{trruc2}), then 
$Q_{\pi^*}=r+\gamma PV_{\pi^*}=r+\gamma PV^*=Q^*$ because $V_{\pi^*}=V^*$. Moreover, recalling Steps 1 and 5,
\begin{align*}
V^*(x)=\tV(x)=\max_{a\in \cA_x}[r(x,a)+\gamma P \tV(x,a)]=\max_{a\in \cA_x}[r(x,a)+\gamma P V^*(x,a)]
=\max_{a\in \cA_x} Q^*(x,a),
\end{align*}
and, by point (ii) applied to $\pi^*$,
$$
Q^*(x,a)=r(x,a)+\gamma\sum_{(y,b)\in\cZ}P(x,a,y)\pi^*(y,b)Q^*(y,b)=r(x,a)+\gamma\sum_{y\in\cX}P(x,a,y)\max_{b\in \cA_y}
Q^*(y,b)
$$
by \eqref{trruc}. This proves (iii). 
\end{proof}

Finally, we recall the

\begin{proof}[Proof of Lemma \ref{rm}]
A simple computation shows that for all $k\geq 0$,
\begin{equation}\label{ttaacc}
\E[Z_{k+1}^2 |\cG_k] \leq (1-\theta_k)^2Z_k^2 + C \theta_k^2 = (1+\theta_k^2)Z_k^2+C \theta_k^2-2\theta_kZ_k^2.
\end{equation}
We set $\gamma_0=1$ and $M_0=Z_0$, $N_0=Z_0$. For $k\geq 1$, we set 
$\gamma_k=[\prod_{\ell=0}^{k-1} (1+\theta_\ell^2)]^{-1}$
and introduce 
$$
M_k=\gamma_k Z_k^2 - C \sum_{\ell=0}^{k-1} \gamma_{\ell+1}\theta_\ell^2 \qquad
\hbox{and} \qquad N_k=M_k+2 \sum_{\ell=0}^{k-1} \gamma_{\ell+1}\theta_\ell Z_\ell^2.
$$
One easily checks, using \eqref{ttaacc} and that $\gamma_{k+1}$ is $\cG_{k}$-measurable, 
that $(M_k)_{k\geq 0}$ and $(N_k)_{k\geq 0}$ are two supermartingales.
Let now $L:=\sum_{k=0}^{\infty} \gamma_{k+1}\theta_k^2\leq \sum_{k\geq 0} \theta_k^2$, which is a.s. finite by assumption.
Since $N_k\geq M_k \geq -CL$ for all $k\geq 0$, both $M_\infty=\lim_k M_k \in \rr$
and $N_\infty=\lim_k N_k \in \rr$ a.s. exist. Using again  that 
$\sum_{k\geq 0} \theta_k^2<\infty$, we deduce that $\gamma_\infty=\lim_{k} \gamma_k>0$ a.s.
We first conclude that
$$
\ell:=\lim_k Z_k^2 = \frac{M_\infty + C L }{\gamma_\infty} \in \rr_+
$$
a.s. exists, and next that 
$$
2\sum_{k\geq 0} \gamma_{k+1}\theta_k Z_k^2=N_\infty-M_\infty \in \rr_+ \quad \hbox{a.s.}
$$
This tells us that necessarily $\ell=0$ a.s. because $\gamma_\infty=\lim_{k} \gamma_k>0$ a.s. and 
$\sum_{k\geq 0} \theta_k=\infty$ a.s. by assumption.
\end{proof}

\end{document}